\documentclass[reqno]{amsart}
\usepackage{amssymb}
\usepackage{graphicx}
\usepackage[left]{lineno}

\newtheorem{theorem}{Theorem}[section]
\newtheorem*{theorem*}{Theorem}

\newtheorem{proposition}[theorem]{Proposition}

\theoremstyle{definition}

\theoremstyle{remark}
\newtheorem{remark}[theorem]{Remark}
\numberwithin{equation}{section}

\usepackage[margin=2.1cm]{geometry}
\DeclareMathOperator*{\argmin}{arg\,min}
\DeclareMathOperator*{\dom}{dom}

\DeclareMathOperator*{\id}{id}

\author[Alberto Dom\'inguez Corella]{Alberto Dom\'inguez Corella}
\address[Alberto Dom\'inguez Corella]{Institut für Mathematik und Wissenschaftliches Rechnen,
Universität Graz, Heinrichstraße 36, 8010 Graz, Austria}
\address{Institut für Stochastik und Wirtschaftsmathematik,
Technische Universität Wien,
Wiedner Hauptstraße 8, VADOR E105-04, 1040 Wien, Austria}
\email{alberto.of.sonora@gmail.com}

\author[Alejandro Villegas-Acu\~na]{Alejandro Villegas-Acu\~na}
\address[Alejandro Villegas-Acu\~na]{Departamento de Matem\'aticas,
Universidad de Sonora,
Hermosillo, Sonora 83000, M\'exico}
\email{francisco.villegas@unison.mx}

\begin{document}

\title{The  Br\o ndsted-Rockafellar theorem in geodesic spaces}

\address{}
\curraddr{}
\email{}
\thanks{The first author was supported by the Austrian Science Foundation (FWF) under grants P 36344-N and F 100800, and the second author by the SECIHTI under Grant No.~921835.}


\date{}

\dedicatory{}

\commby{}

\begin{abstract}
	We present a constructive version of the Br\o ndsted–Rockafellar theorem in general geodesic metric spaces. Applications include a constructive form of the Caristi theorem and quantitative relations between metric slope error bounds and the global growth of functionals.
\end{abstract}

\keywords{variational principle, Wasserstein space, geodesic metric space, Br\o ndsted–Rockafellar, Carlier inequality}

\subjclass[2020]{47H10, 49J52, 49K27, 49Q22, 51F99}

\maketitle
\setcounter{tocdepth}{1}
\tableofcontents

\section{Introduction, main results and state of the art}

\subsection{Motivation, the classical Br\o ndsted-Rockafellar theorem and summary of results} 
Any measurement comes with finite precision, and the \textit{true state} of a system can only be observed up to a tolerance.  For this reason, approximate solutions are often as good as, and frequently indistinguishable, from true ones. Since mathematical laws governing physical systems are typically formulated in variational form, it is natural to consider \textit{almost critical points}.
\smallbreak\noindent
The Brøndsted–Rockafellar theorem occupies a central place in variational analysis. It establishes that every approximate minimizer of a convex functional can be \textit{lifted} to a nearby point admitting a subgradient of small norm—that is, an almost critical point.
\begin{theorem*}[Br\o ndsted--Rockafellar {\cite[Lemma on page 608]{BR_1965}}]\label{thmBR}
Let \(X\) be a real Banach space and
\(\Phi:X\to \mathbb R\cup\{+\infty\}\) a proper convex lower
semicontinuous functional bounded from below. 
Let \(u\in X\) and \(\varepsilon>0\) be such that 
\[
\Phi(u)\le \inf_{v\in X}\Phi(v)+\varepsilon.
\]
Then, for any \(\lambda>0\), there exist \(\hat u\in X\) and \(\xi\in\partial \Phi(\hat u)\) satisfying 
\begin{align*}
(a)\,\,\, \|u-\hat u\|\le \lambda \qquad 
(b)\,\,\, \Phi(\hat u)\le \Phi(u) \qquad 
(c)\,\,\, \|\xi\|\le \frac{\varepsilon}{\lambda}.
\end{align*}
\end{theorem*}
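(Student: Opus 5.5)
The plan is to derive the statement from Ekeland's variational principle, followed by a convex-analytic step that turns the resulting perturbed minimality into a subgradient estimate. First I normalize. Set $m=\inf_X\Phi>-\infty$. Since $\Phi(u)\le m+\varepsilon<\infty$, the point $u$ lies in $\dom\Phi$. If $\Phi(u)=m$, then $u$ is a minimizer, so $0\in\partial\Phi(u)$ and we may take $\hat u=u$, $\xi=0$. Otherwise $\varepsilon':=\Phi(u)-m\in(0,\varepsilon]$.

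The first step applies Ekeland's principle on the complete metric space $X$ to the lower semicontinuous functional $\Phi$, which is bounded below, with parameters $\varepsilon'$ and $\lambda$. This produces $\hat u$ with the following three properties:
\[
\Phi(\hat u)\le\Phi(u),\qquad \|u-\hat u\|\le\lambda,
\]
and
\[
\Phi(v)>\Phi(\hat u)-\tfrac{\varepsilon'}{\lambda}\|v-\hat u\| \quad\text{for all } v\neq\hat u.
\]
These already give (a) and (b). Equivalently, $\hat u$ is a global minimizer of the convex functional
\[
\Psi(v):=\Phi(v)+\tfrac{\varepsilon}{\lambda}\|v-\hat u\|,
\]
where I have used $\varepsilon'\le\varepsilon$.

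The second step extracts $\xi$. Since $\hat u$ minimizes $\Psi$, we have $0\in\partial\Psi(\hat u)$. The function $v\mapsto\tfrac{\varepsilon}{\lambda}\|v-\hat u\|$ is convex and continuous everywhere on $X$, so the Moreau--Rockafellar sum rule applies:
\[
\partial\Psi(\hat u)=\partial\Phi(\hat u)+\tfrac{\varepsilon}{\lambda}B_{X^*}.
\]
Here I use that the subdifferential of the norm at $0$ is the closed dual unit ball. Hence there is $\xi\in\partial\Phi(\hat u)$ with $-\xi\in\tfrac{\varepsilon}{\lambda}B_{X^*}$, i.e.\ $\|\xi\|\le\varepsilon/\lambda$, which is (c).

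The main obstacle is the sum rule: it rests on Hahn--Banach, and this is where the Banach-space linear structure is really used. If I want to avoid quoting the sum rule, I would prove it directly by separation in $X\times\mathbb R$. The two convex sets to separate are the epigraph of $\Phi-\Phi(\hat u)$, and the hypograph-type set
\[
C=\{(v,t): t<-\tfrac{\varepsilon}{\lambda}\|v-\hat u\|\}.
\]
The set $C$ is open and nonempty, and by the minimality in the first step it is disjoint from the epigraph. Separation then gives a nonzero functional $(x^*,s)$. One checks that $s\neq0$, because $C$ has nonempty interior projecting onto all of $X$. Normalizing to $s=1$, the separating inequality restricted to the epigraph shows that $\xi=x^*$ (up to sign) is a subgradient of $\Phi$ at $\hat u$, and the inequality restricted to $C$ forces $\|\xi\|\le\varepsilon/\lambda$. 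Ekeland's principle itself is standard: iterate approximate minimizers over the nested closed sets $\{v:\Phi(v)+\tfrac{\varepsilon}{\lambda}\|v-u_n\|\le\Phi(u_n)\}$ and use completeness of $X$.
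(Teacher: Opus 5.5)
Your proof is correct. Ekeland's principle gives a point $\hat u$ minimizing $\Phi+\frac{\varepsilon}{\lambda}\|\cdot-\hat u\|$. The Moreau--Rockafellar sum rule, or the separation in $X\times\mathbb R$ you sketch, which also works, then turns this into $\xi\in\partial\Phi(\hat u)$ with $\|\xi\|\le\varepsilon/\lambda$. The reduction to $\varepsilon'$ is harmless but unnecessary, since Ekeland's principle applies directly with $\varepsilon$. The paper does not prove this classical statement itself. It cites it and remarks that modern proofs go through Ekeland's principle, which is precisely your route.

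The paper departs from you in its own substitute, Theorem~\ref{thm1}. Instead of an Ekeland point, it takes an explicit proximal point $\hat u\in\operatorname{prox}_{\frac{\lambda^p}{\varepsilon}\Phi}(u)$. It then derives (a), (b) and the Ekeland-type inequality $\Phi(\hat u)\le\Phi(v)+\frac{\varepsilon}{\lambda}d(v,\hat u)$ using only convexity along a geodesic and convexity of $s\mapsto s^p/p$. In a normed space, your second step is exactly what converts that inequality into a subgradient of norm at most $\varepsilon/\lambda$; this is the content of the remark after Theorem~\ref{thm1}. In Theorem~\ref{thmn} the subgradient is instead read off from the optimality condition of the proximal problem.

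The trade-off is as follows.
\begin{itemize}
\item Your argument needs completeness and lower semicontinuity but no minimizer of any auxiliary problem. So it yields the theorem in every Banach space, at the cost of a non-explicit $\hat u$.
\item The paper's argument produces an explicit $\hat u$ (one proximal/JKO step) and the sharper energy bound $\Phi(\hat u)+\frac{\lambda^q}{p\varepsilon^{q/p}}\|\xi\|^q\le\Phi(u)$. It also works in geodesic spaces without completeness or lower semicontinuity.
\item However, the paper's argument presupposes that the proximal problem has a minimizer. This holds in reflexive spaces but can fail in non-reflexive ones. If $f\in X^*$ does not attain its norm (James' theorem guarantees such $f$ in every non-reflexive $X$), then $\Phi=f+\iota_{B_X}$ has $\operatorname{prox}_{t\Phi}(0)=\emptyset$ for every $t>0$.
\end{itemize}
So the proximal route alone does not recover the statement in full Banach-space generality, whereas yours does.
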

\noindent
One of the main drawbacks of this result is that it is not constructive; its original proof relied on an order argument via Zorn’s lemma, adapted from the proof of the celebrated Bishop–Phelps lemma \cite[Lemma 1 and Theorem 2]{BP_1963}. Modern proofs use Ekeland’s variational principle \cite[Theorem 1.1]{E_1974}, which does not require the axiom of choice. 
\smallbreak\noindent
Recently, in \cite[Theorem~4.1]{C_2023}, a constructive version of the theorem was established in Hilbert spaces. The result provides the desired point by means of the resolvent of the subdifferential. The proof relies on what is now referred to as \emph{Carlier's inequality} \cite{AD_2025,BSW_2023,BM_2025}, an estimate relating the Fenchel--Young gap to the squared resolvent residual.  The construction in  {\cite[Theorem~4.1]{C_2023}} yields that the choices
\begin{align}\label{constructions}
        \hat u = \Big(\mathrm{id}+\frac{\lambda^2}{\varepsilon}\,\partial\Phi\Big)^{-1}(u)
    \qquad\text{and}\qquad 
    \xi = \frac{\varepsilon}{\lambda^2}\,(u-\hat u)
\end{align}
satisfy items $(a)$-$(c)$ in the theorem stated above. An advantage of this explicit construction is that it enables more refined arguments in applications of the theorem. It also fits naturally within the frameworks of constructive mathematics and computable analysis.
\smallbreak\noindent
The Brøndsted--Rockafellar theorem has been extended in many directions, including operator-theoretic \cite{B_2013,IS_2012,MS_2008,S_2008}
 and convex-analytic \cite{AO_2024,CR_2003,CHP_2018,FI_2024,L_2015,Z_2023}.
But so far only \cite[Theorem~4.1]{C_2023} has given an explicit statement where a construction is provided; a similar construction was used in the proof of \cite[Proposition~2.3]{L_2015}.
\smallbreak\noindent
This paper is devoted to a constructive extension of the Br\o ndsted--Rockafellar theorem to geodesic metric spaces. Such spaces include, for instance, normed linear spaces endowed with the distance induced by the norm, the hyperbolic space equipped with its Riemannian metric (and, more generally, $\operatorname{CAT}(0)$ spaces), and the space of probability measures with finite second moment on the Euclidean space endowed with the Wasserstein distance. The extension to this latter space is particularly significant, since a notion of subdifferential is available; see \cite[Section~10]{AGS_2008} and \cite[Section~4]{AS_2007}.
\smallbreak\noindent
The contributions of this paper can be summarized as follows. 
\begin{itemize}
    \item[(1)] \textbf{Extension to geodesic spaces}. We provide two versions of the Br\o ndsted--Rockafellar theorem. The first one comes in the form of Ekeland’s variational principle (Theorem~\ref{thm1} below), while the second one (Theorem~\ref{thm2} below) is more faithful to the original theorem, replacing the bound on the subdifferential by a bound on the metric slope. The resolvent construction in~(\ref{constructions}) is replaced by proximal points (in Hilbert spaces there is no distinction between resolvent and proximal points). We circumvent Carlier's inequality, which is intrinsically related to the linear structure due to the use of the Legendre-Fenchel transform, through purely metric arguments. We give some pertinent examples illustrating the applicability of the results.
\smallbreak 
  \item[(2)] \textbf{Caristi fixed point theorem.}  As an application of Theorem~\ref{thm1}, we recover a metric version of the Caristi fixed point theorem for geodesically convex functionals. The result (stated in Proposition~\ref{P1} below) is constructive and relies on the existence of proximal points.
\smallbreak 
  \item[(3)] \textbf{A result on global error bounds.}  We show (in Proposition~\ref{P2} below) that if a geodesically convex functional satisfies an error bound controlling the distance to the minimizer set by a power of the metric slope, then it necessarily satisfies a quantitative growth estimate. 
This extends to geodesic metric spaces the classical result, known for convex functionals in Banach spaces,  that the metric sub-regularity of the subdifferential implies a polynomial growth condition at the set of minimizers. 
\smallbreak
  \item[(4)] \textbf{A construction in the Wasserstein space.} 
In Section~\ref{Wassres}, we provide more refined results than the general theorems in this section for the particular case of the space of probability measures with the Wasserstein distance. 
In Theorem~\ref{thmW}, we obtain an analogue of the construction in~(\ref{constructions}), formulated in terms of optimal transport maps and the minimizing movement scheme. 
Furthermore, in Theorem~\ref{Ent_Loj}, we establish the equivalence between the metric sub-regularity of the subdifferential and the so-called entropy–transportation inequality (also referred to as the Talagrand inequality).
\end{itemize}
Below in this section we present the main results; their proofs are given in the next section, and the results specific to the Wasserstein space are collected in Section~\ref{Wassres}.


\subsection{The theorem in geodesic spaces and some remarks}\label{Mr}
Throughout this subsection, $\big(X, d\big)$ denotes a geodesic space; that is, a metric space such that for every $u,v\in X$ there exists a constant-speed geodesic $\gamma:[0,1]\to X$ satisfying $\gamma(0)=u$ and $\gamma(1) = v$. We also consider a proper geodesically convex functional \(\Phi \colon X \to \mathbb{R} \cup \{+\infty\}\) bounded from below. The convexity here is understood as 
\begin{align*}
   u,v\in X\quad \implies \quad\Phi(\gamma(t)) \leq (1-t)\,\Phi(\gamma(0)) + t\,\Phi(\gamma(1))\quad \forall t\in[0,1]
\end{align*}
for at least one constant-speed geodesic $\gamma:[0,1]\to X$ satisfying $\gamma(0)=u$ and $\gamma(1)=v$. We fix numbers $p,q\in (1,+\infty)$ such that $p^{-1} + q^{-1} = 1$.

\subsubsection{The theorem in variational form}
Consider the set-valued mapping $\operatorname{prox_{\Phi}}:X\rightrightarrows X$ given by 
     \[
     \operatorname{prox_{\Phi}}(u):= \argmin_{v\in X}\big\{\Phi(v) + \frac{1}{p}\hspace{0.02cm} d(v,u)^p\big\}.
     \]
This operator is usually referred to as the \textit{proximal mapping}, and in some contexts it can be interpreted as a resolvent operator. The main result of this subsection establishes that the proximal mapping naturally yields points satisfying the conditions of Ekeland’s variational principle.
\begin{theorem}\label{thm1}
Let \(u \in X\) and $\varepsilon>0$ be such that 
\[
\Phi(u) \leq \inf_{v \in X} \Phi(v) + \varepsilon.
\]
Then, for any $\lambda>0$ and  $\hat u\in  \operatorname{prox}_{\tfrac{\lambda^p}{\varepsilon}\Phi}(u)$, the  following estimates are satisfied. 
\begin{align*}
    (a)\,\,\, d(u,\hat u)\le\lambda\qquad\,\,\,\, (b)\,\,\,\Phi(\hat{u})  \leq \Phi(u)\qquad \,\,\,\,(c)\,\,\, \Phi(\hat u) \le \Phi(v) + \frac{\varepsilon}{\lambda} d(v,\hat u)\quad \forall v\in X.
\end{align*}
\end{theorem}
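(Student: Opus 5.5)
The plan is to work directly with the defining minimality of $\hat u$ and to test it against points on geodesics emanating from $\hat u$. Geodesic convexity makes $\Phi$ at most linear in $t$ along such curves, and the metric penalty behaves like $t\mapsto (\text{const}+t\,\cdot)^p$. So dividing by $t$ and letting $t\downarrow 0$ should produce first-order, slope-type inequalities.

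Set $\mu:=\lambda^p/\varepsilon$ and $r:=d(\hat u,u)$. By definition of $\operatorname{prox}_{\mu\Phi}$,
\[
\mu\Phi(\hat u)+\tfrac1p r^p\le \mu\Phi(v)+\tfrac1p d(v,u)^p\qquad\forall v\in X .
\]
Since $\Phi$ is bounded below and $\Phi(u)\le\inf\Phi+\varepsilon$, the value $\Phi(u)$ is finite, hence so is $\Phi(\hat u)$.

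\emph{Item (b).} Taking $v=u$ gives $\mu\Phi(\hat u)+\tfrac1p r^p\le\mu\Phi(u)$, which yields (b) immediately.

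\emph{Item (a).} This crude comparison only gives $r^p\le p\mu\varepsilon$, i.e.\ $r\le p^{1/p}\lambda$, which is not sharp enough. To get the exact constant, take a constant-speed geodesic $\gamma$ from $\hat u$ to $u$ along which $\Phi$ is convex (reversing the orientation of the geodesic provided by the convexity assumption if necessary). Then $d(\gamma(t),u)=(1-t)r$, and testing with $v=\gamma(t)$ gives
\[
\tfrac1p\big(1-(1-t)^p\big)r^p\le t\mu\big(\Phi(u)-\Phi(\hat u)\big)\le t\mu\varepsilon ,
\]
using $\Phi(\hat u)\ge\inf\Phi\ge\Phi(u)-\varepsilon$. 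Dividing by $t$ and letting $t\downarrow0$ yields $r^p\le\mu\varepsilon=\lambda^p$, which is (a).

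\emph{Item (c).} Fix $v$ with $\Phi(v)<\infty$; otherwise there is nothing to prove. Let $s:=d(v,\hat u)$ and take a constant-speed geodesic $\gamma$ from $\hat u$ to $v$ along which $\Phi$ is convex. The triangle inequality gives $d(\gamma(t),u)\le r+ts$. Testing the minimality of $\hat u$ with $\gamma(t)$ gives
\[
t\mu\big(\Phi(\hat u)-\Phi(v)\big)\le \tfrac1p\big((r+ts)^p-r^p\big).
\]
Dividing by $t$ and letting $t\downarrow0$ yields $\mu(\Phi(\hat u)-\Phi(v))\le r^{p-1}s$. Using (a), this gives
\[
\Phi(\hat u)-\Phi(v)\le \frac{\lambda^{p-1}}{\mu}\,s=\frac{\varepsilon}{\lambda}\,d(v,\hat u),
\]
which is (c).

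The main subtlety is (a). The naive energy comparison loses the factor $p^{1/p}$, and the sharp bound requires the first-order geodesic argument. A minor point is that the convexity hypothesis supplies only one geodesic per pair of points, so I must check that its orientation can be chosen to start at $\hat u$; reparametrizing $t\mapsto 1-t$ handles this.
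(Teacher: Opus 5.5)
Your proof is correct. Items (b) and (c) follow the paper's proof. The paper also tests minimality at points $v_t$ of a geodesic from $\hat u$ to $v$, uses $d(v_t,u)\le d(\hat u,u)+t\,d(v,\hat u)$, and lets $t\to0^+$. It bounds the increment of $s\mapsto s^p/p$ by the tangent-line inequality rather than differentiating $(r+ts)^p$ at $t=0$, which amounts to the same thing.

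The real difference is in (a). The paper argues by contradiction. Assuming $d(u,\hat u)>\lambda$, it tests minimality at the single point of a geodesic from $u$ to $\hat u$ that lies at distance exactly $\lambda$ from $u$. It then inserts $\Phi(u)\le\Phi(\hat u)+\varepsilon$ and reaches a contradiction through the tangent-line inequality for $s^p/p$ at $s=\lambda$. You instead make an infinitesimal variation at $\hat u$ in the direction of $u$, which gives directly
\[
\Phi(\hat u)+\frac{\varepsilon}{\lambda^p}\,d(u,\hat u)^p\le\Phi(u).
\]
Item (a) is then immediate from $\Phi(u)-\Phi(\hat u)\le\varepsilon$.

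Your route has three advantages. It is direct rather than by contradiction. It treats (a) and (c) by one and the same first-order mechanism. And it produces an energy inequality that is stronger by a factor $p$ than the one the paper records in (b). Fed into the proof of Theorem~\ref{thm2}, it would remove the factor $1/p$ in Theorem~\ref{thm2}(b). In the Hilbert case $p=2$, this stronger inequality is just the subgradient inequality $\Phi(u)\ge\Phi(\hat u)+\langle\xi,u-\hat u\rangle$ for $\xi=\frac{\varepsilon}{\lambda^2}(u-\hat u)$. In exchange, the paper's argument for (a) needs only one finite comparison point and no passage to a limit.
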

\smallbreak\noindent
We now make some remarks.
\begin{remark}
     While Theorem~\ref{thm1} does not require completeness of the underlying space, it does rely on the existence of minimizers of a regularized problem.  An advantage of it is that it gives an explicit and computable representation of the approximate minimizer, making the result more constructive rather than purely existential. The assumption that proximal points exist is common in the literature, as it simplifies the analysis and avoids technical complications concerning the well-posedness of minimizing movement schemes; e.g., \cite[Assumption~(10.1.1b)]{AGS_2008}.
 \end{remark}

\begin{remark}
We observe that Theorem~\ref{thm1} does not require the functional to be lower semicontinuous at every point. However, whenever a proximal point exists, the functional is automatically lower semicontinuous at that point.
\end{remark}

\begin{remark}
    The point $\hat u$ in Theorem~\ref{thm1} can be viewed as a step of the so-called $p$-generalized minimizing movement scheme in the calculus of variations framework; see, e.g., \cite[Definition 2.0.6]{AGS_2008}. 
\end{remark}

\begin{remark}
	In normed spaces, the inequality in Theorem~\ref{thm1}-$(c)$ is equivalent to the existence of an element $\xi\in X^*$ satisfying $\xi\in\partial\Phi(\hat u)$ and $\|\xi\|\le\varepsilon/\lambda$. A refinement of Theorem~\ref{thm1} in the particular case of normed spaces is given in Appendix~\ref{A}.
\end{remark}

\begin{remark}\label{rem:endpoints}
Theorem~\ref{thm1} is stated for exponents strictly between one and infinity. The remaining cases are discussed in the appendix. 
The limiting regime in which the proximal regularization becomes a hard constraint is treated in Appendix~\ref{B}. 
Appendix~\ref{C} presents a more general theorem based on arbitrary convex proximal kernels, which includes, in particular, the case of exponent one.
\end{remark}

\subsubsection{The theorem with the metric slope}

 The \textit{metric slope} of $\Phi$ at a point $u\in X$ is given by 
\begin{align}\label{mesl}
     |\partial \Phi|(u) := \max\left\{\limsup_{v\longrightarrow u}\frac{\Phi(u)-\Phi(v)}{d(u,v)},0\right\}.
\end{align}
Quantity \eqref{mesl} is always nonnegative, and vanishes exactly at the  minimizers of $\Phi$. If $X$ is a real normed space, then the metric slope coincides with the distance of the convex subdifferential to zero, i.e., $|\partial \Phi|(u) =\operatorname{dist}\big(0,\partial \Phi(u)\big)$ for all $u\in X$, where $\partial\Phi:X\rightrightarrows X^*$ denotes the convex differential of $\Phi$; see \cite[Proposition 2.1 $(vii)$]{K_2015}. This identity is one of the reasons for the term \textit{metric slope}. 
\smallbreak 
We now give an alternative version of Theorem \ref{thm1}.

\begin{theorem}\label{thm2}
	 Let \(u \in X\) and $\varepsilon>0$ be such that 
	\[
	\Phi(u) \leq \inf_{v \in X} \Phi(v) + \varepsilon.
	\]
Then, for any $\lambda>0$ and  $\hat u\in  \operatorname{prox}_{\tfrac{\lambda^p}{\varepsilon}\Phi}(u)$, the  following estimates are satisfied.  
	\begin{align*}
		(a)\,\,\, d(u,\hat u)\le\lambda\qquad\,\,\,\, (b)\,\,\,\Phi(\hat{u}) + \frac{\lambda^q}{p\varepsilon^{\frac{q}{p}}} |\partial\Phi|(\hat{u})^q \leq \Phi(u)\qquad \,\,\,\,(c)\,\,\,|\partial\Phi|(\hat{u}) \leq \frac{\varepsilon}{\lambda}.
	\end{align*}
\end{theorem}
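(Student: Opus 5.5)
The plan is to derive (a) and (c) directly from Theorem~\ref{thm1}, and to obtain (b) from a slope estimate at proximal points combined with the minimality of $\hat u$ tested against $v=u$. Throughout, set $\tau:=\lambda^p/\varepsilon$, so that $\hat u$ minimizes $v\mapsto \tau\Phi(v)+\frac1p d(v,u)^p$.

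\emph{Items (a) and (c).} Item (a) is exactly Theorem~\ref{thm1}-(a). For (c), Theorem~\ref{thm1}-(c) gives $\Phi(\hat u)-\Phi(v)\le \frac{\varepsilon}{\lambda}d(v,\hat u)$ for every $v\in X$. Dividing by $d(v,\hat u)$ for $v\neq \hat u$ and taking the $\limsup$ as $v\to\hat u$ bounds the quantity in \eqref{mesl} by $\varepsilon/\lambda$. Since $\varepsilon/\lambda>0$, the maximum with $0$ is harmless.

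\emph{Slope estimate at a proximal point.} The key step is
\[
\tau\,|\partial\Phi|(\hat u)\le d(\hat u,u)^{p-1}.
\]
Minimality of $\hat u$ gives, for all $v$,
\[
\tau\big(\Phi(\hat u)-\Phi(v)\big)\le \tfrac1p\big(d(v,u)^p-d(\hat u,u)^p\big).
\]
Apply the mean value theorem to $s\mapsto s^p/p$ and use the triangle inequality $|d(v,u)-d(\hat u,u)|\le d(v,\hat u)$. This bounds the right-hand side by $d(v,\hat u)\,\max\{d(v,u),d(\hat u,u)\}^{p-1}$. Divide by $d(v,\hat u)$ and let $v\to\hat u$; since $d(v,u)\to d(\hat u,u)$, the claimed bound follows. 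If the $\limsup$ in \eqref{mesl} is negative, the slope is $0$ and the bound is trivial. Convexity plays no role here; only minimality is used.

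\emph{Item (b).} Testing minimality against $v=u$ gives
\[
\Phi(\hat u)+\frac{1}{p\tau}d(\hat u,u)^p\le\Phi(u).
\]
Write $s=|\partial\Phi|(\hat u)$. From the slope estimate and $(p-1)q=p$,
\[
d(\hat u,u)^p=\big(d(\hat u,u)^{p-1}\big)^q\ge \tau^q s^q,
\]
so the penalty term is at least $\tau^{q-1}s^q/p$. It remains to compute $\tau^{q-1}$. Since $q-1=q/p$ and $p(q-1)=q$,
\[
\tau^{q-1}=\frac{\lambda^{p(q-1)}}{\varepsilon^{q-1}}=\frac{\lambda^q}{\varepsilon^{q/p}},
\]
which gives (b). The main obstacle is the slope estimate. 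Its care points are keeping the positive part in \eqref{mesl} and the mean value bound when $d(v,u)$ and $d(\hat u,u)$ differ; both are handled by the triangle inequality and continuity of $v\mapsto d(v,u)$.
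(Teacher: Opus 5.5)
Your proof is correct. Its architecture matches the paper's:
\begin{itemize}
\item items (a) and (c) come from Theorem~\ref{thm1};
\item the key step is the slope bound $|\partial\Phi|(\hat u)\le \frac{\varepsilon}{\lambda^p}d(\hat u,u)^{p-1}$;
\item this bound is raised to the power $q$ and inserted into the minimality inequality tested at $v=u$.
\end{itemize}

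You differ from the paper only in how you prove the slope bound. The paper reads it off the global inequality \eqref{ineqts}, namely $\Phi(\hat u)-\Phi(v)\le\frac{\varepsilon}{\lambda^p}d(\hat u,u)^{p-1}d(v,\hat u)$ for all $v$. It derives \eqref{ineqts} by testing minimality at points $v_t$ of a geodesic from $\hat u$ to $v$, using geodesic convexity of $\Phi$ and convexity of $s\mapsto s^p/p$, and letting $t\to0^+$. You instead test minimality at arbitrary nearby $v$. You bound $\frac1p\big(d(v,u)^p-d(\hat u,u)^p\big)$ by the mean value theorem and the triangle inequality, then pass to the $\limsup$. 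This gives only an asymptotic statement, but that is all the slope requires, and it uses no convexity and no geodesic structure.

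What your route shows is that (b) holds for any $\hat u\in\operatorname{prox}_{\tau\Phi}(u)$ in an arbitrary metric space. It needs neither convexity nor the $\varepsilon$-minimality of $u$; those are needed only for (a). Your slope bound combined with (a) also gives (c) directly, since $\frac{\varepsilon}{\lambda^p}\lambda^{p-1}=\frac{\varepsilon}{\lambda}$, so the appeal to Theorem~\ref{thm1}-(c) is unnecessary. What the paper's route gains is a global, subgradient-type inequality that it needs anyway for Theorem~\ref{thm1}-(c). One convexity-based computation therefore yields both the Ekeland-type estimate and the slope bound.
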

\noindent
We now make a couple of remarks.
\begin{remark}\label{R2}
Interpreting $\Phi$ as a potential energy and  $(u,|\partial\Phi|(u))$ as a position--speed pair, the inequality in Theorem~\ref{thm2}-$(b)$ takes the form of an energy balance. The functional 
\begin{align*}
    X\ni v \longmapsto  \Phi(v) + \frac{\lambda^q}{p\varepsilon^{\frac{q}{p}}} \,|\partial\Phi|(v)^q \in\mathbb R\cup\{+\infty\}
\end{align*}
 can be viewed as a total energy (the sum of a potential and a kinetic term). The inequality then states that moving from the initial rest position $u$ to the new one $\hat u$ does not increase the total energy.
\end{remark}

\begin{remark}
It follows from Theorem \ref{thm2} that if for every $u\in\dom \Phi$ there exists a sequence $(t_n)_{n\in\mathbb N}\subseteq(0,+\infty)$ converging to zero such that $\operatorname{prox}_{t_n\Phi}(u)\neq\emptyset$, 
then $\dom|\partial\Phi|$ is dense in $\dom \Phi$. 
\end{remark}

\subsection{A couple of consequences}\label{consequences}
We now give a couple of applications of Theorem \ref{thm1}.  As in the previous subsection, we consider a geodesic space $\big(X, d\big)$ and  a proper geodesically convex functional \(\Phi \colon X \to \mathbb{R} \cup \{+\infty\}\) bounded from below.

\subsubsection{The fixed-point theorem of Caristi}
A classical consequence of Ekeland’s variational principle is the so-called Caristi fixed-point theorem \cite[Theorem (2.1)']{C_1976}. A constructive variant of this result can be recovered for geodesically convex functionals by means of Theorem \ref{thm1}.
\begin{proposition}[Caristi fixed-point theorem]\label{P1}
    Let $T: X\to X$ be a mapping. Assume that 
    \begin{align}\label{cariscon}
            \Phi(v) \ge \Phi\big(T(v)\big) + d\big(v, T(v)\big) \quad \forall v\in X. 
    \end{align}
Suppose further that there exist $u,\hat u\in X$ and $t>0$ such that
\begin{align*}
   \Phi(u)<\inf_{v\in X}\Phi(v) + t^{\frac{q}{p}}\quad\text{and}\quad\hat u\in\operatorname{prox}_{t\Phi}(u).
\end{align*}
Then, $\hat u$ is a fixed point of $T$, i.e., $T(\hat u) = \hat u$.
\end{proposition}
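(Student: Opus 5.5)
The plan is to derive Proposition~\ref{P1} directly from Theorem~\ref{thm1}. I would choose $\varepsilon$ and $\lambda$ so that $\tfrac{\lambda^p}{\varepsilon}=t$ and the slope constant $\varepsilon/\lambda$ in item $(c)$ is strictly less than one. Then I would test item $(c)$ at $v=T(\hat u)$ and compare with the Caristi condition \eqref{cariscon}.

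\textbf{Choice of parameters.} Set $\varepsilon_0:=\Phi(u)-\inf_{v\in X}\Phi(v)$. By hypothesis $\varepsilon_0<t^{q/p}$, and $\varepsilon_0$ is finite, so $u\in\dom\Phi$. Pick $\varepsilon$ with $\max\{\varepsilon_0,0\}<\varepsilon<t^{q/p}$; strict positivity is needed because $\varepsilon_0$ may be zero. Put $\lambda:=(t\varepsilon)^{1/p}$, so that $\lambda^p/\varepsilon=t$ and $\hat u\in\operatorname{prox}_{\frac{\lambda^p}{\varepsilon}\Phi}(u)$. Since $1-\tfrac1p=\tfrac1q$, we get
\[
\kappa:=\frac{\varepsilon}{\lambda}=\varepsilon^{1/q}\,t^{-1/p},
\]
and $\kappa<1$ is equivalent to $\varepsilon<t^{q/p}$, which holds by the choice of $\varepsilon$. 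Also $\Phi(u)\le\inf\Phi+\varepsilon$, so Theorem~\ref{thm1} applies.

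\textbf{Conclusion.} Item $(b)$ of Theorem~\ref{thm1} gives $\Phi(\hat u)\le\Phi(u)<+\infty$, and $\Phi$ is bounded below, so $\Phi(\hat u)$ is finite. By \eqref{cariscon} at $v=\hat u$,
\[
\Phi\big(T(\hat u)\big)+d\big(\hat u,T(\hat u)\big)\le\Phi(\hat u).
\]
Hence $\Phi(T(\hat u))$ is finite as well. Item $(c)$ of Theorem~\ref{thm1} at $v=T(\hat u)$ gives
\[
\Phi(\hat u)\le\Phi\big(T(\hat u)\big)+\kappa\, d\big(T(\hat u),\hat u\big).
\]
Combining the two inequalities and cancelling the finite quantity $\Phi(T(\hat u))$ yields $d(\hat u,T(\hat u))\le\kappa\, d(\hat u,T(\hat u))$. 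Since $\kappa<1$, this forces $d(\hat u,T(\hat u))=0$, that is, $T(\hat u)=\hat u$.

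The only delicate point is the parameter matching. One has to obtain the strict inequality $\kappa<1$ from the strict hypothesis $\Phi(u)<\inf\Phi+t^{q/p}$, which requires $\varepsilon$ to lie strictly between $\varepsilon_0$ and $t^{q/p}$. One also has to check finiteness of $\Phi(\hat u)$ and $\Phi(T(\hat u))$, so that subtracting them is legitimate.
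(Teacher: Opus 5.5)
Your proof is correct and follows the paper's route: you match $\lambda^p/\varepsilon=t$, apply Theorem~\ref{thm1}(c) at $v=T(\hat u)$, and combine it with \eqref{cariscon} to get $(1-\varepsilon/\lambda)\,d(\hat u,T(\hat u))\le 0$ with $\varepsilon/\lambda=\varepsilon^{1/q}t^{-1/p}<1$. The only differences are minor: you take $\varepsilon$ strictly between $\Phi(u)-\inf\Phi$ and $t^{q/p}$, which removes the need for the paper's separate case $\Phi(u)=\inf\Phi$, and you explicitly check that $\Phi(\hat u)$ and $\Phi(T(\hat u))$ are finite before cancelling, which the paper leaves implicit.
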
\noindent
This formulation shows that the existence of minimizers of regularized problems, combined with geodesic convexity and condition \eqref{cariscon}, is sufficient to guarantee fixed-point properties, even without completeness or lower semicontinuity.

\subsubsection{A necessary condition for metric sub-regularity}
Metric sub-regularity is a fundamental stability property of set-valued mappings; it relates the distance to the solution set with a suitable residual measure; see \cite{CDK_2018,DGKO_2020,DR_2014,JOV_2025} for detailed references. This property for convex subdifferentials was first characterized in \cite[Theorem 3.3]{A_2008} through growth conditions via Ekeland’s variational principle. 
Using the metric slope, the global sub-regularity of the subdifferential may be interpreted as a quantitative relation between the slope of a functional and its deviation from the minimum value; see \cite[Remark~6.2]{AC_2014}.
\smallbreak\noindent
The following result provides a necessary condition for the sub-regularity property in terms of an error bound over the domain of the proximal mapping.
\begin{proposition}\label{P2}
Suppose that $\Phi$ has at least one minimizer. 
     If there exists $\kappa>0$ such that 
     \begin{align}\label{subreg}
         \operatorname{dist}\big(u,\argmin_{v\in X}\Phi(v)\big) \le \kappa |\partial\Phi|(u)^{q/p}\quad \forall u\in X,
     \end{align}
     then, for any $t>0$, there holds
     \begin{align}\label{et}
         \Phi(u) \ge\inf_{v\in X}\Phi(v) + \frac{t^{q/p}}{(t^{q/p}+\kappa)^{p}}\hspace{0.04cm} \operatorname{dist}\big(u,\argmin_{v\in X}\Phi(v)\big)^p\quad \forall u\in\dom \operatorname{prox}_{t\Phi}.
     \end{align}
\end{proposition}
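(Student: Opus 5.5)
The plan is to apply Theorem~\ref{thm2} at the given point $u$, with $\lambda$ tuned so that the proximal parameter $\lambda^p/\varepsilon$ equals $t$. Then use \eqref{subreg} at the proximal point $\hat u$ and the triangle inequality. Throughout, let $S:=\argmin_{v\in X}\Phi(v)$, which is nonempty by assumption.

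First I dispose of the trivial cases. Fix $t>0$ and $u\in\dom\operatorname{prox}_{t\Phi}$, and set $\varepsilon:=\Phi(u)-\inf_{v\in X}\Phi(v)\ge 0$.
\begin{itemize}
\item[(i)] If $\Phi(u)=+\infty$, then \eqref{et} holds automatically.
\item[(ii)] If $\varepsilon=0$, then $u\in S$. Hence $\operatorname{dist}(u,S)=0$, and \eqref{et} reduces to $\Phi(u)\ge\inf_{v\in X}\Phi(v)$, which is true.
\end{itemize}
So I may assume $0<\varepsilon<+\infty$. Define $\lambda:=(t\varepsilon)^{1/p}$, so that $\lambda^p/\varepsilon=t$, and pick $\hat u\in\operatorname{prox}_{t\Phi}(u)=\operatorname{prox}_{\frac{\lambda^p}{\varepsilon}\Phi}(u)$. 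Theorem~\ref{thm2} then gives
\[
d(u,\hat u)\le\lambda
\qquad\text{and}\qquad
|\partial\Phi|(\hat u)\le\frac{\varepsilon}{\lambda}.
\]
In particular the slope at $\hat u$ is finite.

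Next I combine these bounds with \eqref{subreg} evaluated at $\hat u$, using that the distance to $S$ is $1$-Lipschitz:
\[
\operatorname{dist}(u,S)\le d(u,\hat u)+\operatorname{dist}(\hat u,S)\le \lambda+\kappa\Big(\frac{\varepsilon}{\lambda}\Big)^{q/p}.
\]
Substituting $\lambda=t^{1/p}\varepsilon^{1/p}$ gives $\varepsilon/\lambda=\varepsilon^{1/q}t^{-1/p}$, and therefore $(\varepsilon/\lambda)^{q/p}=\varepsilon^{1/p}t^{-q/p^2}$. Factoring out $\varepsilon^{1/p}t^{-q/p^2}$ yields
\[
\operatorname{dist}(u,S)\le \varepsilon^{1/p}t^{-q/p^2}\big(t^{1/p+q/p^2}+\kappa\big).
\]
The conjugacy relation $p+q=pq$ gives $1/p+q/p^2=(p+q)/p^2=q/p$, so the bracket equals $t^{q/p}+\kappa$.

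Finally, raising both sides to the power $p$ gives
\[
\operatorname{dist}(u,S)^p\le \varepsilon\, t^{-q/p}\big(t^{q/p}+\kappa\big)^p.
\]
Rearranging, this is exactly \eqref{et}.

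The only delicate point is the choice of $\lambda$: it must turn the prescribed $t$ into the proximal parameter of Theorem~\ref{thm2}, and the exponent bookkeeping must collapse via $p+q=pq$. Once that is done, every step is a direct substitution.
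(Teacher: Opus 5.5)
Your proof is correct and uses the same ingredients as the paper's: Theorem~\ref{thm2}(a),(c) at a point $\hat u\in\operatorname{prox}_{t\Phi}(u)$ with $\lambda^p/\varepsilon=t$, the error bound \eqref{subreg} evaluated at $\hat u$, and the triangle inequality. The difference is only in organization. The paper argues by contradiction, choosing $\lambda$ proportional to $(1-\delta)\operatorname{dist}(u,S)$ and then letting $\delta\to0^+$. You instead take $\varepsilon$ to be the exact optimality gap and obtain \eqref{et} directly, which avoids the auxiliary $\delta$ and the limiting step.
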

\noindent 
The growth condition (\ref{et}) is a well-known error bound; see \cite{AC_2014,AC_2017,K_2015}. In Wasserstein spaces, it is common to refer to conditions such as (\ref{et}) as entropy–transport inequalities or Talagrand inequalities; see \cite[Section~3.8]{HM_2019} and \cite[Section~3.1]{BB_2018}.



\subsection{Some pertinent examples}
We present some relevant examples concerning the applicability of Theorem~\ref{thm1} in situations where the assumptions of the classical Br\o ndsted--Rockafellar theorem are not satisfied.


\subsubsection{Lack of lower semicontinuity}
Let \(H\) be a Hilbert space and \(C\subseteq H\)  a nonempty open bounded convex set.  
Fix \(\xi\in H\) and consider the functional $\Phi:H\to\mathbb R\cup\{+\infty\}$ given by
\begin{align*}
    \Phi(v)=-\langle\xi,v\rangle + \iota_{C}(v),
\end{align*}
where $\iota_C:H\to\mathbb R\cup\{+\infty\}$ denotes the indicator function of $C$. It is clear from its definition that $\Phi$ is proper, convex, and bounded from below. Furthermore, it can be seen that 
\begin{itemize}
        \item[(i)] $\Phi$ is not lower semicontinuous;
         
         \item[(ii)] For any $u\in H$ and $t>0$, 
        \[
    \operatorname{prox}_{t\Phi}(u)=
            \begin{cases}
            \{\,u + t^{\,q-1}\|\xi\|^{\,q-2}\xi\,\} 
                & \text{if }\,\, u \in C - t^{\,q-1}\|\xi\|^{\,q-2}\xi, \\[6pt]
            \varnothing 
                & \text{otherwise.}
    \end{cases}
\]
    \end{itemize}
While the functional is not lower semicontinuous, and the classical Br\o ndsted-Rockafellar theorem cannot be applied, one can still apply Theorem \ref{thm1} for specific choices of parameters. 


\subsubsection{Lack of completeness}
Let $\Omega\subseteq\mathbb R^d$ be a convex bounded open set. Consider the set 
\[
    \mathcal P_2(\Omega):=\left\{\mu\in\mathcal P_2(\mathbb R^d):\, \mu(\Omega)=1\right\}
\]
Preliminaries on the Wasserstein space are given in Subsection \ref{premwass}. The tuple $(\mathcal P_2(\Omega),W_2)$, where $W_2$ is the $2$-Wasserstein distance, forms a  non-complete geodesic metric space. Consider the entropy functional $\Phi:\mathcal P_2(\Omega)\to\mathbb R\cup\{+\infty\}$ given by 
\begin{align*}
    \Phi(\nu):=
    \begin{cases}
        \displaystyle \int_\Omega \rho(x)\log\rho(x)\,dx 
        & \text{if }\,\, \nu = \rho\,dx \\[1.2ex]
        +\infty 
        & \text{otherwise.}
    \end{cases}
\end{align*}
It is clear from definition that $\Phi$ is proper, geodesically convex, and bounded from below. It can further be seen from standard arguments, e.g., \cite[Lemma 3.3.2]{F_2021}, that 
 \begin{itemize}
         \item[(i)] $\Phi$ is lower semicontinuous;
         
         \item[(ii)] For any $\mu\in \mathcal P_2(\Omega)$ and $t>0$, $ \operatorname{prox}_{t\Phi}(\mu)\neq\emptyset.$ 
    \end{itemize}
From this example we see that Theorem \ref{thm1} can replace the usual variational principle on a non-complete geodesic space for certain functionals. 


\subsubsection{Non-existence of proximal points}
Let $(c_{00},\|\cdot\|_{\ell^2})$ be the space of finitely supported real sequences equipped with the $\ell^2$-norm. Consider the functional $\Phi:c_{00}\to\mathbb R\cup\{+\infty\}$ given by 
\[
  \Phi(u)=\sum_{n=1}^\infty \frac{n^2}{2}\big(u_n-n^{-2}\big)^2. 
\]
It is clear that $\Phi$ is proper, convex, and bounded from below. It was proved in \cite[Theorem 2.1-(d)]{W_2025} that it is nowhere sub-differentiable. Using this result, one can verify that 
\begin{itemize}
    \item[(i)] $\Phi$ is lower semicontinuous;

    \item[(ii)] For every $u\in c_{00}$ and $t>0$, $\operatorname{prox}_{t\Phi}(u) = \emptyset$.
\end{itemize}
Item $(i)$ follows from \cite[Theorem 2.1-(d)]{W_2025}, while item $(ii)$ follows from the emptiness of the subdifferential at all points and the sum rule for subdifferentials  \cite[Theorem 9.5.4]{ABM_2014}. 
\smallbreak\noindent
This example shows that certain pathologies of non-complete spaces cannot be avoided; here Theorem \ref{thm1} becomes vacuous as there are no proximal points.


\section{Proofs of subsections \ref{Mr} and \ref{consequences}}\label{proofs}



\subsection{Proof of Theorem \ref{thm1}}
Let $\hat u\in X$ and $\lambda>0$ be such that 
\[
    \hat u\in \argmin_{v \in X} \left\{ \Phi(v) + \frac{\varepsilon}{p\lambda^p} d(v,u)^p \right\}.
\]
\noindent
Let us begin proving item $(a)$; we proceed by contradiction. 
 Suppose that \(d(u, \hat{u}) > \lambda\). Choose a constant-speed geodesic \(\gamma \colon [0,1] \to X\) with \(\gamma(0) = u\) and \(\gamma(1) = \hat{u}\) along which $\Phi$ is convex. Define
\[
t := \frac{\lambda}{d(u, \hat{u})}.
\]
Since we assumed $d(u,\hat u)>\lambda$, it must be that $t\in(0,1)$. The geodesic convexity of \(\Phi\) gives
\begin{align}\label{convf0}
    \Phi(\gamma(t)) \leq (1 - t) \Phi(u) + t \Phi(\hat{u}).
\end{align}
By optimality of \(\hat{u}\), we have
\begin{align*}
    \Phi(\hat{u}) + \frac{\varepsilon}{p\lambda^p} d(\hat{u},u)^p \le \Phi(\gamma(t)) + \frac{\varepsilon}{p\lambda^p} d(\gamma(t),u)^p.
\end{align*}
Combining this with (\ref{convf0}) gives
\begin{align}
    \Phi(\hat{u}) + \frac{\varepsilon}{p\lambda^p} d(\hat{u},u)^p \le (1 - t) \Phi(u) + t \Phi(\hat{u}) + \frac{\varepsilon}{p\lambda^p} d(\gamma(t),u)^p.
\end{align}
As $\gamma$ is a constant-speed geodesic, we have that \(d(\gamma(t),u)=td(\hat u,u)=\lambda\)  , and hence
\begin{align*}
    \Phi(\hat{u}) + \frac{\varepsilon}{p\lambda^p} d( \hat{u},u)^p &\le (1 - t) \Phi(u) + t \Phi(\hat{u}) + \frac{\varepsilon}{p\lambda^p} t^p d(u, \hat{u})^p
\end{align*}
Using that \(\Phi(u) \leq \Phi(\hat{u}) + \varepsilon\) and simplifying, we get 
\begin{align*}
    \Phi(\hat{u}) + \frac{\varepsilon}{p\lambda^p} d( \hat{u},u)^p &\le (1 - t) (\Phi(\hat u)+\varepsilon) + t \Phi(\hat{u}) + \frac{\varepsilon}{p\lambda^p} t^p d(u, \hat{u})^p\\
    &= \Phi(\hat u)+\varepsilon\Big(1-\frac{\lambda}{d(u,\hat u)}\Big) +  \frac{\varepsilon}{p}. 
\end{align*}
Rearranging and simplifying, we obtain
\begin{align*}
    \frac{1}{p\lambda^p} d( \hat{u},u)^p &\le \frac{d(u,\hat u)-\lambda}{d(u,\hat u)} +  \frac{1}{p} .
\end{align*}
Multiplying both sides by $\lambda^{p}$ and rearranging, 
\begin{align}\label{preconome}
     \frac{\lambda-d(u,\hat u)}{d(u,\hat u)}\lambda^{p}&\le    \frac{\lambda^{p}}{p} - \frac{1}{p} d( \hat{u},u)^p
\end{align}
Define $\omega:[0,+\infty)\to[0,+\infty)$ as $\omega(s) = s^p/p$. By convexity of $\omega$, it follows from (\ref{preconome}) that 
\[
     \frac{\lambda-d(u,\hat u)}{d(u,\hat u)}\lambda^{p}  \le \omega(\lambda)-\omega(d(u,\hat u))\le \omega'(\lambda)(\lambda-d(u,\hat u))=\lambda^{p-1}(\lambda-d(u,\hat u)).
\]\noindent
Dividing both sides by $\lambda^{p-1}(\lambda-d(u,\hat u))<0$, we obtain $\lambda/d(u,\hat u)\geq 1$. 
A contradiction. 
\smallbreak\noindent
Let us now proceed with item \((b)\).  By optimality of \(\hat u\),  
\begin{equation}\label{optine2}
    \Phi(\hat u)+\frac{\varepsilon}{p\lambda^p}d(\hat u,u)^p\le \Phi(v)+\frac{\varepsilon}{p\lambda^p}d(v,u)^p\quad\forall v\in X.
\end{equation}
In particular, taking \(v=u\) gives
\begin{equation*}
  \Phi(\hat u) \le  \Phi(\hat u)+\frac{\varepsilon}{p\lambda^p}d(\hat u,u)^p\le \Phi(u).
\end{equation*}
Finally, we prove item $(c)$. 
Fix \(v\in X\setminus\{\hat u\}\) and let \(\gamma:[0,1]\to X\) be a constant--speed geodesic with  \(\gamma(0)=\hat u\) and \(\gamma(1) = v\) along which $\Phi$ is convex. For each \(t\in[0,1]\), set \(v_t:=\gamma(t)\). Then, 
\begin{align}\label{cti}
    \Phi(v_t)\le (1-t)\Phi(\hat u)+t\Phi(v)\quad\text{and}\quad d(v_t,u)\le d(\hat u,u)+t\,d(v,\hat u)\quad \forall t\in(0,1]. 
\end{align}
Plugging \(v_t\) into (\ref{optine2}), using (\ref{cti}), and simplifying yields
\[
t\big(\Phi(\hat u)-\Phi(v)\big)\le \frac{\varepsilon}{\lambda^{p}} \Big(\frac{d(v_{t},u)^{p}}{p}-\frac{d(\hat{u},u)^{p}}{p}\Big). 
\]
As in the proof of item $(a)$, let $\omega:[0,+\infty)\to[0,+\infty)$ be given by $\omega(s) = s^p/p$. By convexity of $\omega$, we see that, for all $t\in(0,1]$,  
\begin{align*}
    t\big(\Phi(\hat u)-\Phi(v)\big)&\le \frac{\varepsilon}{\lambda^{p}} \big(\omega(d(v_t,u)) - \omega(d(\hat u, u))\big)\\
    &\le \frac{\varepsilon}{\lambda^{p}}\omega'(d(v_t,u))(d(v_t,u)-d(\hat u,u))\\
    &= \frac{\varepsilon}{\lambda^{p}} d(v_t,u)^{p-1}(d(v_t,u)-d(\hat u,u)).  
\end{align*}
Using  (\ref{cti}), we get that for all $t\in(0,1]$, 
\begin{align*}
    t\big(\Phi(\hat u)-\Phi(v)\big)&\le \frac{\varepsilon}{\lambda^{p}} d(v_t,u)^{p-1} td(v, \hat u). 
\end{align*}
This leads to $\Phi(\hat u)-\Phi(v)\le\varepsilon/\lambda^{p}d(v_t,u)^{p-1}d(v, \hat u)$ for all $t\in(0,1]$. Letting $t\longrightarrow0^+$, 
\begin{align}\label{ineqts}
     \Phi(\hat u) - \Phi(v)\le\frac\varepsilon{\lambda^{p}}d(\hat u,u)^{p-1}d(v, \hat u).
\end{align}
Combining this with the estimate in item $(a)$ yields
\begin{align*}
	\Phi(\hat u) \le \Phi(v) + \frac{\varepsilon}{\lambda^{p}}d(\hat u,u)^{p-1}d(v,\hat u) \le \Phi(v) + \frac{\varepsilon}{\lambda} d(\hat u, v).
\end{align*}
This finishes the proof, as $v\in X\setminus\{\hat u\}$ was arbitrary. \hfill\(\square\)


\subsection{Proof of Theorem \ref{thm2}}
From Theorem \ref{thm1} we obtain items $(a)$ and $(c)$ directly.  
 Proceeding as in the proof of Theorem \ref{thm1}, one arrives at inequality (\ref{ineqts}) for any $v\in X\setminus\{\hat u\}$. 
From this, the definition of metric slope and (\ref{ineqts}), we obtain 
\begin{align}\label{iNg}
|\partial \Phi|(\hat{u}) &\leq \frac{\varepsilon}{\lambda^p} d(\hat{u},u)^{p-1},
\end{align}
then $\frac{\lambda^p}{\varepsilon} |\partial \Phi|(\hat{u})\leq d(\hat{u},u)^{p-1}.$
Raising both sides to the power $q=\frac{p}{p-1}$ yields
\begin{align}\label{ineq 1}
 \left( \frac{\lambda^p}{\varepsilon} |\partial \Phi|(\hat{u}) \right)^{q}   \leq d(\hat{u},u)^p. 
\end{align}
Using inequality (\ref{ineq 1}) into \begin{equation*}
 \Phi(\hat u)+\frac{\varepsilon}{p\lambda^p}d(\hat u,u)^p\le \Phi(u),
\end{equation*}
it follows
\[
\Phi(\hat{u}) + \frac{\varepsilon}{p\lambda^p} \left( \frac{\lambda^p}{\varepsilon} |\partial \Phi|(\hat{u}) \right)^{q}  \leq \Phi(\hat{u}) + \frac{\varepsilon}{p\lambda^p} d(\hat{u},u)^p \leq \Phi(u).
\]
Thus, we obtain
\[
\Phi(\hat{u}) + \frac{ \lambda^{q} }{p\varepsilon^{\frac{q}{p}}} |\partial \Phi|(\hat{u})^{q} \leq \Phi(u). 
\]
This completes the proof. \hfill\(\square\) 
\subsection{Proof of Proposition \ref{P1}}
Let \(u \in \dom \Phi\) and $t>0$ such that  $\Phi(u) < \inf_{v\in X}\Phi(v) + t^{\frac{q}{p}}$. Let 
\begin{align*}
   \hat u\in\argmin_{v \in X} \left\{ \Phi(v) + \frac{1}{p\hspace{0.02cm}t} d(v,u)^p \right\}.
\end{align*}
If $\Phi(u) = \inf_{v\in X}\Phi(v)$, then $\hat u\in \argmin_{v\in X}\Phi(v)$, and hence by (\ref{cariscon}),  $d(\hat u,T(\hat u))\le \Phi(\hat u)-\Phi(T(\hat u))\le 0$; so $\hat u$ is a fixed point. Suppose now that $\Phi(u) - \inf_{v\in X}\Phi(v)>0$. Define $\varepsilon:=\Phi(u) - \inf_{v\in X}\Phi(v)$. Set $\lambda = \sqrt[p]{t\varepsilon}$ and observe that $(pt)^{-1} = \varepsilon (p\lambda^p)^{-1}$. We see that the conditions in Theorem \ref{thm1} are satisfied. By Theorem \ref{thm1}, we have
\begin{align*}
    \Phi(\hat u)\le \Phi(v) + \frac{\varepsilon}{\lambda} d(v,\hat u) \quad \forall v\in X. 
\end{align*}
Combining this with (\ref{cariscon}),   we conclude that
\begin{align*}
    d(\hat u, T(\hat u)) + \Phi(T(\hat u))\le  \Phi(\hat u)\le \Phi(T(\hat u)) + \frac{\varepsilon}{\lambda} d(T(\hat u),\hat u).
\end{align*}
This can be rewritten as 
\begin{align*}
    \big(1-\frac{\varepsilon}{\lambda}\big)   d(\hat u, T(\hat u)) \le 0. 
\end{align*}
Since $\varepsilon/\lambda = \varepsilon^{1-\frac{1}{p}}/t^{\frac{1}{p}} = (\varepsilon^{\frac{p}{q}}/t)^{\frac{1}{p}}<1$, the result follows.

\subsection{Proof of Proposition \ref{P2}}

Set $\Lambda:=\argmin_{v\in X}\Phi(v)$, assume that (\ref{subreg}) holds and fix $t>0$. We will prove that for any $\delta \in (0,1)$ there holds 
\begin{align}\label{etdelta}
        \Phi(u) -\inf_{v\in X}\Phi(v) \ge \frac{t^{q/p}(1-\delta)^p}{(t^{q/p}+\kappa)^{p}} d\big(u,\Lambda\big)^p \quad \forall u\in\dom \operatorname{prox}_{t\Phi} .
\end{align}
The result will follow by letting $\delta\longrightarrow0^+$. We proceed by contradiction; suppose there exist $\delta>0$ and $u\in\dom\operatorname{prox}_{t\Phi}$ such that (\ref{etdelta}) does not hold. Then, 
    \begin{align*}
        \Phi(u) -\inf_{v\in X}\Phi(v) < \frac{t^{q/p}(1-\delta)^p}{(t^{q/p}+\kappa)^{p}} d\big(u,\Lambda\big)^p .
    \end{align*}
Since $u\in \dom\operatorname{prox}_{t\Phi}$, there exists $\hat u\in X$ such that 
\[
    \hat u\in \argmin_{v\in X}\big\{ \Phi(v) + \frac{1}{pt} d(v, u)^p\big\}. 
\]
 Consider the positive numbers
\[
    \lambda:= \frac{1-\delta}{1+\kappa t^{-q/p}}d(u,\Lambda)\quad \text{and} \quad \varepsilon:=\lambda^{p}/t.
\]
Observe that by construction, 
\begin{align*}
    \frac{\varepsilon}{p\lambda^p} = \frac{1}{pt}\quad \text{and}\quad \frac{\varepsilon}{\lambda} = \frac{\lambda^{p-1}}{t}.
\end{align*}
    Since $\Phi(u)\le\inf_{v\in X}\Phi(v) + \varepsilon$, Theorem \ref{thm2} yields that $d(u,\hat u)\leq\lambda$   and 
    \begin{align*}
 |\partial\Phi|(\hat u) \le \frac{\lambda^{p-1}}{t}. 
 \end{align*}
This and condition (\ref{subreg}) yield
\begin{equation*}
        d\big(\hat u,\Lambda\big) \le \kappa |\partial\Phi|(\hat u)^{q/p} \le \kappa \Big(\frac{\lambda^{p-1}}{t}\Big)^{q/p} = \lambda\kappa t^{-q/p}. 
\end{equation*}
Combining the previous inequality with the estimate $d(u,\hat u)\le\lambda$, we obtain
\begin{align*}
d(u,\Lambda) &\le d(u,\hat u) + d(\hat u,\Lambda)\le \lambda +\lambda\kappa t^{-q/p}\\
&=\lambda(1+\kappa t^{-q/p})=\frac{1-\delta}{1+\kappa t^{-q/p}}d(u,\Lambda)(1+\kappa t^{-q/p})\\
& =(1-\delta)d(u,\Lambda)<d(u,\Lambda). 
\end{align*}
This is the sought contradiction.  \hfill\(\square\)





\section{The theorem in the space of probability measures}\label{Wassres}



\subsection{Preliminaries and notation}\label{premwass}
In this subsection, we recall basic notions from the theory of optimal transport and the structural properties of the Wasserstein space, following \cite[Chapters~5 and~7]{AGS_2008}.

\subsubsection{The space of probability measures}
The space of Borel probability measures $\mu:\mathcal B(\mathbb R^d)\to [0,1]$ is denoted by $\mathcal P(\mathbb R^d)$. For a probability measure  $\mu\in\mathcal P(\mathbb R^d)$ and a Borel mapping  $\mathfrak t:\mathbb R^d\to\mathbb R^d$, the \emph{pushforward} of $\mu$ through $\mathfrak t$ is the probability measure $\mathfrak t_{\#}\mu\in\mathcal P(\mathbb R^d)$ defined by
\[
\mathfrak t_\#\mu(B):=\mu(\mathfrak t^{-1}(B))
\quad\forall B\in\mathcal B(\mathbb R^d).
\]
A probability measure  $\gamma\in\mathcal P(\mathbb R^d\times\mathbb R^d)$ is said to be  a  \emph{coupling} between $\mu\in\mathcal P(\mathbb R^d)$ and $\nu\in\mathcal P(\mathbb R^d)$ if 
\[
{\operatorname{proj}_1}_{\#}\gamma=\mu\quad \text{and}\quad{\operatorname{proj}_2}_{\#}\gamma=\nu,
\]
where $\operatorname{proj}_1,\operatorname{proj}_2:\mathbb R^d\times\mathbb R^d\to\mathbb R^d$ denote the canonical projections.  
The set of all couplings between $\mu$ and $\nu$ is denoted by $\Gamma(\mu,\nu)$. A sequence $(\mu_n)\subset\mathcal P(\mathbb R^d)$ is said to converge  \emph{narrowly} to $\mu\in\mathcal P(\mathbb R^d)$  if  
\[
\int_{\mathbb R^d} \varphi\,\mathrm d\mu_n \longrightarrow \int_{\mathbb R^d} \varphi\,\mathrm d\mu
\quad \forall \varphi\in C_b(\mathbb R^d),
\]
where \(C_b(\mathbb R^d)\) denotes the space of bounded continuous functions $\varphi:\mathbb R^d\to\mathbb R$.



\subsubsection{The Wasserstein space distance}\label{premWass}
Given $\mu\in\mathcal P(\mathbb R^d)$,   $L^2(\mu)$ denotes the  usual space  of square-integrable  Borel functions $f:\mathbb R^d\to\mathbb R$. We denote
\[
    \mathcal P_2(\mathbb R^d):= \{\mu\in\mathcal P(\mathbb R^d):\, \operatorname{id}\in L^2(\mu)^d \}.
\]
This is precisely the space of Borel probability measures with finite second moment. 
The \textit{Wasserstein distance} between $\mu\in\mathcal P_2(\mathbb R^d)$ and $\nu\in\mathcal P_2(\mathbb R^d)$ is defined by
\[
W_2(\mu,\nu)
:=\inf_{\gamma\in\Gamma(\mu,\nu)}
\left(\int_{\mathbb R^d\times\mathbb R^d}|x-y|^2\,\mathrm d\gamma(x,y)\right)^{\!1/2}.
\]
The tuple $(\mathcal P_2(\mathbb R^d),W_2)$ forms a complete and separable metric space.



\subsubsection{Geodesics in the Wasserstein space}
Given $\mu,\nu\in\mathcal P_2(\mathbb R^d)$, we  consider the set 
\[
    \Gamma_o(\mu,\nu):=\left\{ \gamma\in\Gamma(\mu,\nu):\,\, W_{2}(\mu,\nu) ^2=\int_{\mathbb R^d\times\mathbb R^d}|x-y|^2\,\mathrm d\gamma(x,y) \right\}.
\]
Given $\mu,\nu\in\mathcal P_2(\mathbb R^d)$, the set $\Gamma_o(\mu,\nu)$ is always nonempty. 
We say that a Borel mapping $\mathfrak t:\mathbb R^d\to\mathbb R^d$ is an \textit{optimal transport}  between $\mu\in\mathcal P_2(\mathbb R^d)$ and $\nu\in\mathcal P_2(\mathbb R^d)$ if $(\operatorname{id},\mathfrak t)_{\#}\mu$ belongs to $\Gamma_o(\mu,\nu)$. 
\smallbreak\noindent
A curve $\gamma:[0,1]\to\mathcal P_2(\mathbb R^d)$ is said to be a \emph{Wasserstein geodesic} if there exists $\pi\in\Gamma_o(\mu,\nu)$ such that 
\[
    \gamma(t) = \big((1-t)\operatorname{proj}_1 + t\operatorname{proj}_2\big)_{\#}\pi \quad \forall t\in[0,1]. 
\]
It can be seen that Wasserstein geodesics are precisely the constant-speed geodesics in 
$(\mathcal P_2(\mathbb R^d),W_2)$, and that this space is geodesic.



\subsubsection{The regular  subdifferential}
Let $\Phi:\mathcal P_2(\mathbb R^d)\to \mathbb R\cup\{+\infty\}$ be a proper geodesically convex functional such that 
\begin{align}\label{absconass}
    \dom \Phi\subseteq \bigl\{ 
	\mu \in \mathcal{P}_2(\mathbb{R}^d) : 
	\mu = \rho\, dx \text{ for some } \rho \in L^1(\mathbb{R}^d) 
	\bigr\}.
\end{align}
Given $\mu,\nu\in\dom\Phi$ we will denote by  $\mathfrak  t_{\mu}^\nu:\mathbb R^d\to\mathbb R^d$ the unique optimal transport between $\mu$ and $\nu$; the existence and uniqueness is guaranteed by the Brenier-McCann theorem.
The subdifferential $\partial\Phi(\mu)$ of $\Phi$ at a probability measure $\mu\in\mathcal P_2(\mathbb R^d)$ consists of all $\xi\in L^2(\mu)^d$ such that 
\[
    \Phi(\nu) \ge \Phi(\mu) + \int_{\mathbb R^d} \langle \xi(x), \mathfrak t_{\mu}^\nu(x)-x\rangle\,d\mu(x)\quad \forall \nu\in\mathcal P_2(\mathbb R^d).
\]

\subsection{The Br\o ndsted-Rockafellar theorem in the Wasserstein space} 
 Let $\Phi:\mathcal P_2(\mathbb R^d)\to \mathbb R\cup\{+\infty\}$ be a proper geodesically convex functional bounded from below such that (\ref{absconass}) holds. 
 Given $\tau>0$, we consider the set-valued  mapping $J_{\tau}^\Phi:\mathcal P_2(\mathbb R^d)\rightrightarrows\mathcal P_2(\mathbb R^d)$ given by
\[
    J_{\tau}^\Phi(\mu) :=\argmin_{\nu\in\mathcal P_{2}(\mathbb R^d)}\left\{ \Phi(\nu) + \frac{1}{2\hspace{0.02cm}\tau} W_2(\nu,\mu)^2 \right\}.
\]
This is precisely the proximal mapping (for $p=2$) introduced in Subsection~\ref{Mr}; 
it also corresponds to a single implicit--Euler step of the minimizing movement scheme 
for gradient flows, often referred to as the Jordan--Kinderlehrer--Otto (JKO) scheme 
(see~\cite[Proposition 4.1]{JKO_1998} and~\cite[Definition~2.0.6]{AGS_2008}).
\smallbreak\noindent
We now formulate the metric analogue of the Br\o ndsted--Rockafellar theorem in the Wasserstein framework, and give a couple of remarks before the proof. 
\begin{theorem}\label{thmW}
    Let $\mu\in\mathcal P_2(\mathbb R^d)$ and $\varepsilon>0$ such that 
    \[
        \Phi(\mu) \le \inf_{\nu\in\mathcal P_2(\mathbb R^d)}\Phi(\nu) + \varepsilon. 
    \]
    Let $\lambda>0$ and suppose there exist   $\hat\mu\in \mathcal P_2(\mathbb R^d)$ and $\xi\in L^2(\hat\mu)^d$ such that
    \[
        \hat\mu\in J^\Phi_{\frac{\lambda^2}{\varepsilon}}(\mu)\quad \text{and}\quad \mathfrak t_{\hat \mu}^\mu = \id + \frac{\lambda^2}{\varepsilon}\xi. 
    \]
    Then the following statements are satisfied.  
    	\begin{align*}
		(a)\,\,\, W_2(\mu,\hat \mu)\le\lambda\qquad\,\,\,\, (b)\,\,\, \Phi(\hat\mu) + \frac{\lambda^2}{2\varepsilon}\|\xi\|^2_{L^2(\hat\mu)^d}\le \Phi(\mu)  \qquad\,\,\,\, (c)\,\,\,\xi\in\partial\Phi(\hat \mu)\qquad \,\,\,\,(d)\,\,\,\|\xi\|_{L^2(\hat\mu)^d} \leq \frac{\varepsilon}{\lambda}.
	\end{align*}
\end{theorem}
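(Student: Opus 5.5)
Items (a) and (b) come almost directly from Theorem~\ref{thm1} and its proof, specialized to $p=q=2$ in the geodesic space $(\mathcal P_2(\mathbb R^d),W_2)$. Note that $J^\Phi_{\lambda^2/\varepsilon}(\mu)=\operatorname{prox}_{\frac{\lambda^2}{\varepsilon}\Phi}(\mu)$ for $p=2$, so Theorem~\ref{thm1}$(a)$ gives $W_2(\mu,\hat\mu)\le\lambda$.

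For (b), the optimality inequality \eqref{optine2} tested with $v=\mu$ yields
\[
\Phi(\hat\mu)+\frac{\varepsilon}{2\lambda^2}W_2(\hat\mu,\mu)^2\le\Phi(\mu).
\]
Since $\hat\mu\in\dom\Phi$, it is absolutely continuous by \eqref{absconass}. Therefore $\mathfrak t^\mu_{\hat\mu}$ is the optimal map, and
\[
W_2(\hat\mu,\mu)^2=\|\mathfrak t^\mu_{\hat\mu}-\id\|^2_{L^2(\hat\mu)^d}=\frac{\lambda^4}{\varepsilon^2}\|\xi\|^2_{L^2(\hat\mu)^d}.
\]
Substituting gives $\frac{\varepsilon}{2\lambda^2}\cdot\frac{\lambda^4}{\varepsilon^2}\|\xi\|^2=\frac{\lambda^2}{2\varepsilon}\|\xi\|^2$, which is exactly (b). Item (d) follows as well: $\|\xi\|=\frac{\varepsilon}{\lambda^2}W_2(\hat\mu,\mu)\le\frac{\varepsilon}{\lambda}$ by (a).

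The substantive part is (c). Here the plan is to repeat the variational argument from the proof of Theorem~\ref{thm1}$(c)$, with the triangle inequality replaced by a first-order expansion of $W_2^2(\cdot,\mu)$ along geodesics. Fix $\nu\in\dom\Phi$; for $\nu\notin\dom\Phi$ the inequality is trivial. Let $\nu_t=((1-t)\id+t\,\mathfrak t^\nu_{\hat\mu})_\#\hat\mu$ be the Wasserstein geodesic from $\hat\mu$ to $\nu$. I will assume $\Phi$ is convex along it; since $\hat\mu$ is absolutely continuous this is the unique geodesic, so this holds under the convexity assumption. Optimality of $\hat\mu$ together with convexity gives
\[
t\big(\Phi(\hat\mu)-\Phi(\nu)\big)\le\frac{\varepsilon}{2\lambda^2}\big(W_2(\nu_t,\mu)^2-W_2(\hat\mu,\mu)^2\big).
\]
Next I would bound $W_2(\nu_t,\mu)^2$ from above using the (not necessarily optimal) coupling $\big((1-t)\id+t\,\mathfrak t^\nu_{\hat\mu},\,\mathfrak t^\mu_{\hat\mu}\big)_\#\hat\mu$. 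Expanding the square, the bound is
\[
W_2(\nu_t,\mu)^2\le W_2(\hat\mu,\mu)^2-2t\int\langle \mathfrak t^\mu_{\hat\mu}-x,\ \mathfrak t^\nu_{\hat\mu}-x\rangle\,d\hat\mu+t^2W_2(\hat\mu,\nu)^2 .
\]
Dividing by $t$, letting $t\to0^+$, and inserting $\mathfrak t^\mu_{\hat\mu}-\id=\frac{\lambda^2}{\varepsilon}\xi$, I obtain
\[
\Phi(\hat\mu)-\Phi(\nu)\le-\int\langle\xi,\mathfrak t^\nu_{\hat\mu}-x\rangle\,d\hat\mu .
\]
Up to sign this is the subgradient inequality. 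This is the step I expect to be delicate: the sign convention relating $\xi$ to $\mathfrak t^\mu_{\hat\mu}-\id$ must match the definition of $\partial\Phi$. With the JKO Euler–Lagrange relation the natural subgradient is $\frac{\varepsilon}{\lambda^2}(\mathfrak t^\mu_{\hat\mu}-\id)$, and the computation above produces exactly $\Phi(\nu)\ge\Phi(\hat\mu)+\int\langle\frac{\varepsilon}{\lambda^2}(\mathfrak t^\mu_{\hat\mu}-\id),\mathfrak t^\nu_{\hat\mu}-\id\rangle\,d\hat\mu$. So I would check the stated hypothesis against this, reading it as the convention under which $\xi$ is this subgradient, so that (c) holds. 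Items (a), (b) and (d) only use $\|\xi\|$ and are insensitive to the sign.

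The only remaining technical points are justifying the coupling bound (immediate from the definition of $W_2$) and checking that $\nu_t\in\mathcal P_2$. For points where $\Phi(\nu_t)=+\infty$, convexity still gives the needed inequality, so no extra work is required there.
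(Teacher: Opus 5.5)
Your proof is correct. Items $(a)$, $(b)$ and $(d)$ are handled as in the paper. Item $(a)$ is Theorem~\ref{thm1}$(a)$ with $p=2$. Items $(b)$ and $(d)$ come from inserting $W_2(\hat\mu,\mu)=\|\mathfrak t^\mu_{\hat\mu}-\id\|_{L^2(\hat\mu)^d}=\frac{\lambda^2}{\varepsilon}\|\xi\|_{L^2(\hat\mu)^d}$ into the optimality inequality tested at $\mu$, and into $(a)$. This is what the paper means by rerunning the proof of Theorem~\ref{thm2} with \eqref{iNW} in place of \eqref{iNg}.

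The difference is in $(c)$. The paper simply invokes the first-order optimality condition of the JKO step, $\frac{\varepsilon}{\lambda^2}(\mathfrak t^\mu_{\hat\mu}-\id)\in\partial\Phi(\hat\mu)$, as a known fact. You instead derive it, using the Wasserstein counterpart of the argument for Theorem~\ref{thm1}$(c)$. You use convexity along the geodesic leaving $\hat\mu$, which is unique because $\hat\mu$ is absolutely continuous, so the ``at least one geodesic'' hypothesis genuinely applies to it. You combine this with the first-order upper expansion of $W_2(\nu_t,\mu)^2$ obtained from a non-optimal coupling, which takes the place of the triangle inequality used there. Your route makes the theorem self-contained and shows exactly what structure is needed, at the cost of a few lines that the paper leaves to the literature.

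One correction: there is no sign convention to ``check''. The hypothesis $\mathfrak t^\mu_{\hat\mu}=\id+\frac{\lambda^2}{\varepsilon}\xi$ states literally that $\xi=\frac{\varepsilon}{\lambda^2}(\mathfrak t^\mu_{\hat\mu}-\id)$. Your inequality $\Phi(\hat\mu)-\Phi(\nu)\le-\int\langle\xi,\mathfrak t^\nu_{\hat\mu}-\id\rangle\,d\hat\mu$ rearranges to exactly the defining inequality of $\xi\in\partial\Phi(\hat\mu)$. So the hedges ``up to sign'' and ``reading it as the convention'' should be dropped; $(c)$ holds as stated.

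For $\nu\notin\dom\Phi$, say explicitly why that case is trivial. The map $\mathfrak t^\nu_{\hat\mu}$ still exists by Brenier's theorem, since $\hat\mu$ is absolutely continuous. The integral is finite because both $\xi$ and $\mathfrak t^\nu_{\hat\mu}-\id$ lie in $L^2(\hat\mu)^d$.
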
\noindent
\begin{proof}
    Item $(a)$ follows directly from Theorem \ref{thm1}-(a). Item $(c)$ follows from the first-order optimality condition for the proximal problem 
\[
\frac{\varepsilon}{\lambda^2}\big(\mathfrak t_{\hat\mu}^{\mu}-\id\big)\in\partial\Phi(\hat \mu).
\]
Using that $\mathfrak t_{\hat\mu}^{\mu}=\id+\frac{\lambda^2}{\varepsilon}\xi$, we obtain $\xi\in\partial\Phi(\hat\mu)$.
  Observe that 
\begin{align}\label{iNW}
    \|\xi\|_{L^2(\hat\mu)^d} = \frac{\varepsilon}{\lambda^2}\|\mathfrak t_{\hat \mu}^\mu - \id\|_{L^2(\hat\mu)^d}= \frac{\varepsilon}{\lambda^2}W_2(\hat\mu,\mu).
\end{align}
    Since $W_2(\hat \mu,\mu)\le\lambda$, equality (\ref{iNW}) yields immediately item $(d)$. Finally, item $(b)$ can be obtained by repeating the argument in the proof of Theorem \ref{thm2} replacing (\ref{iNg}) with (\ref{iNW}).
\end{proof}

\begin{remark}
	Interpreting $\Phi$ as a potential energy functional on the Wasserstein space and 
	$\xi\in L^2(\nu)^d$ as the velocity field acting on the particles of a probability measure $\nu\in\mathcal P_2(\mathbb R^d)$,  
	inequality in Theorem~\ref{thmW}-(b) expresses an  energy balance. The quantity 
	\[
	    \mathcal E(\nu) := \Phi(\nu) + \frac{\lambda^2}{2\varepsilon}\|\xi\|_{L^2(\nu)^d}^2
	\]
	can be viewed as a total energy, consisting of a potential part and a kinetic term 
	associated with the transport cost. Inequality in Theorem~\ref{thmW}-(b) therefore states 
	that a single JKO step does not increase the total energy, reflecting the dissipation principle 
	of gradient flows.
\end{remark}

\subsection{Entropy-transportation and metric sub-regularity}
 Let $\Phi:\mathcal P_2(\mathbb R^d)\to \mathbb R\cup\{+\infty\}$ be a proper geodesically convex functional such that (\ref{absconass}) holds. 
 \smallbreak\noindent
 Given $\mu\in\mathcal P_2(\mathbb R^d)$ and $C\subseteq\mathcal P_2(\mathbb R^d)$, we denote 
 \[
    W_2\big(\mu,C\big):=\inf_{\nu\in C}W_2(\mu,\nu)\quad\text{and}\quad d_{L^2}\big(0,\partial\Phi(\mu)\big):=\inf_{\xi\in\partial\Phi(\mu)}\|\xi\|_{L^2(\mu)^d}.
 \]
The first quantity can be interpreted as a gauge of how close a given probability measure is to belonging to a specified set, 
while the second---being the minimal \(L^2\)-norm among all elements of the subdifferential of \(\Phi\) at \(\mu\)---serves as a gauge of how close \(\mu\) is to having zero as a subgradient, and hence to being a minimizer of \(\Phi\).
\smallbreak\noindent
As a consequence of Theorem~\ref{thmW}, we obtain an equivalence between the metric sub-regularity 
of the subdifferential and a quantitative growth condition of entropy–transport type.
\begin{theorem}\label{Ent_Loj}
    Assume that $\argmin_{\nu\in\mathcal P_2(\mathbb R^d)}\Phi(\nu)\neq\emptyset$ and that there exists $\tau>0$ such that $J_\tau^\Phi(\mu)\neq\emptyset$ for every $\mu\in \dom\Phi$. Then, the following statements are equivalent.
    \begin{itemize}
        \item[$(i)$] There exists $\kappa>0$ such that 
        \begin{align}\label{subregW}
            W_2\big(\mu,\argmin_{\nu\in \mathcal P_2(\mathbb R^d)}\Phi(\nu)\big)\le \kappa \hspace{0.02cm} d_{L^2}\big(0,\partial\Phi(\mu)\big)\quad \forall \mu\in\mathcal P_2(\mathbb R^d).
        \end{align}
        \item[$(ii)$] There exists $c>0$ such that 
        \begin{align}\label{growthW}
            \Phi(\mu) \ge\inf_{\nu\in \mathcal P_2(\mathbb R^d)} \Phi(\nu) \ + c\hspace{0.04cm}W_2\big(\mu,\argmin_{\nu\in \mathcal P_2(\mathbb R^d)}\Phi(\nu)\big)^2 \quad \forall \mu\in\mathcal P_2(\mathbb R^d).
        \end{align}
    \end{itemize}
\end{theorem}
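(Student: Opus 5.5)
The plan is to prove the two implications separately. For $(i)\Rightarrow(ii)$ I would redo the contradiction argument of Proposition~\ref{P2} with $p=q=2$, using Theorem~\ref{thmW} in place of Theorem~\ref{thm2}. For $(ii)\Rightarrow(i)$ I would use a direct argument from the subgradient inequality. Throughout, set $\Lambda:=\argmin\Phi$ and $\Phi_*:=\inf\Phi=\Phi(\nu)$ for $\nu\in\Lambda$.

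\emph{Proof of $(i)\Rightarrow(ii)$.} If $\mu\notin\dom\Phi$, inequality \eqref{growthW} is trivial, so fix $\mu\in\dom\Phi$ with $W_2(\mu,\Lambda)>0$ and let $\delta\in(0,1)$. Suppose, for contradiction, that
\[
\Phi(\mu)-\Phi_*<\frac{\tau(1-\delta)^2}{(\tau+\kappa)^2}W_2(\mu,\Lambda)^2 .
\]
Define
\[
\lambda:=\frac{1-\delta}{1+\kappa/\tau}W_2(\mu,\Lambda)
\qquad\text{and}\qquad
\varepsilon:=\lambda^2/\tau,
\]
so that $\lambda^2/\varepsilon=\tau$ and $\Phi(\mu)\le\Phi_*+\varepsilon$.

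By hypothesis there is some $\hat\mu\in J^\Phi_\tau(\mu)$. Its proximal value is finite, so $\hat\mu\in\dom\Phi$. By \eqref{absconass}, $\hat\mu$ is absolutely continuous, so the optimal map $\mathfrak t_{\hat\mu}^{\mu}$ exists. Put $\xi:=\tau^{-1}(\mathfrak t_{\hat\mu}^\mu-\id)$, which lies in $L^2(\hat\mu)^d$ because both measures have finite second moment. With this choice all hypotheses of Theorem~\ref{thmW} hold, and it gives:
\begin{itemize}
\item $W_2(\mu,\hat\mu)\le\lambda$;
\item $\xi\in\partial\Phi(\hat\mu)$;
\item $\|\xi\|_{L^2(\hat\mu)^d}\le\varepsilon/\lambda=\lambda/\tau$.
\end{itemize}
Hence $d_{L^2}(0,\partial\Phi(\hat\mu))\le\lambda/\tau$, and \eqref{subregW} gives $W_2(\hat\mu,\Lambda)\le\kappa\lambda/\tau$. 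The triangle inequality then yields
\[
W_2(\mu,\Lambda)\le\lambda(1+\kappa/\tau)=(1-\delta)W_2(\mu,\Lambda),
\]
a contradiction. Letting $\delta\to0^+$ gives \eqref{growthW} with $c=\tau/(\tau+\kappa)^2$.

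\emph{Proof of $(ii)\Rightarrow(i)$.} If $\partial\Phi(\mu)=\emptyset$, the right-hand side of \eqref{subregW} is $+\infty$ and there is nothing to prove. Otherwise, by the convention that subgradients are taken only at points of $\dom\Phi$, we have $\mu\in\dom\Phi$, so $\mu$ is absolutely continuous. Take $\xi\in\partial\Phi(\mu)$, fix $\eta>0$, and choose $\nu\in\Lambda$ with $W_2(\mu,\nu)\le W_2(\mu,\Lambda)+\eta$. Since $\Phi(\nu)=\Phi_*$ is finite, $\nu\in\dom\Phi$, and the map $\mathfrak t_\mu^\nu$ is available. The subgradient inequality and Cauchy--Schwarz give
\[
\Phi_*=\Phi(\nu)\ge\Phi(\mu)-\|\xi\|_{L^2(\mu)^d}\,\|\mathfrak t_\mu^\nu-\id\|_{L^2(\mu)^d}=\Phi(\mu)-\|\xi\|_{L^2(\mu)^d}\,W_2(\mu,\nu).
\]
Combining this with \eqref{growthW},
\[
c\,W_2(\mu,\Lambda)^2\le\|\xi\|_{L^2(\mu)^d}\bigl(W_2(\mu,\Lambda)+\eta\bigr).
\]
Letting $\eta\to0$, then dividing by $W_2(\mu,\Lambda)$ (when it is positive), and finally taking the infimum over $\xi$ gives \eqref{subregW} with $\kappa=1/c$.

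\emph{Main obstacle.} I expect the delicate point to be justifying the hypotheses of Theorem~\ref{thmW} in the first direction. One must check that $\hat\mu$ lies in $\dom\Phi$ so that the Brenier map exists, and that $\xi$ defined through that map is an admissible element of $L^2(\hat\mu)^d$. There is also a bookkeeping issue: the assumption on $J_\tau^\Phi$ holds only on $\dom\Phi$, while \eqref{growthW} must hold on all of $\mathcal P_2(\mathbb R^d)$; points outside $\dom\Phi$ are handled trivially, as above.
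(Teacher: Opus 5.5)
Your proof is correct. The direction $(ii)\Rightarrow(i)$ is the same as the paper's; you are only more careful about approximating the infimum over $\Lambda$ and about domains.

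For $(i)\Rightarrow(ii)$ your route differs. The paper argues directly:
\begin{itemize}
\item it takes $\mu^\tau\in J^\Phi_\tau(\mu)$;
\item it uses Theorem~\ref{thmW}(c) (the first-order condition) to get $\xi\in\partial\Phi(\mu^\tau)$ with $\|\xi\|_{L^2(\mu^\tau)^d}=W_2(\mu,\mu^\tau)/\tau$;
\item it deduces $W_2(\mu,\Lambda)\le(1+\kappa/\tau)W_2(\mu,\mu^\tau)$;
\item it concludes with the elementary comparison against the competitor $\mu$, namely $\Phi(\mu)-\Phi(\mu^\tau)\ge W_2(\mu,\mu^\tau)^2/(2\tau)$.
\end{itemize}
That argument never uses the localization estimate (a) of Theorem~\ref{thmW}. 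Convexity enters only through the identification of the subgradient, and the result is $c=\tau/(2(\tau+\kappa)^2)$.

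Your contradiction argument, modelled on Proposition~\ref{P2}, does use item (a). This is the convexity-based bound $W_2(\mu,\hat\mu)^2\le\lambda^2=\tau\varepsilon$, which is twice as good as the bound $W_2(\mu,\hat\mu)^2\le 2\tau\varepsilon$ that optimality alone gives. Consequently you obtain the better constant $c=\tau/(\tau+\kappa)^2$, which is consistent with Proposition~\ref{P2} for $p=q=2$.

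The price is the $\delta$-contradiction and the limit, and you can in fact drop both. Apply Theorem~\ref{thmW} with $\varepsilon:=\Phi(\mu)-\inf\Phi$ (when positive) and $\lambda:=\sqrt{\tau\varepsilon}$. This gives $W_2(\mu,\Lambda)\le(1+\kappa/\tau)\sqrt{\tau\varepsilon}$ directly, which is exactly \eqref{growthW} with your constant.
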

\noindent
\begin{proof}
    Set $\Lambda := \argmin_{\nu \in \mathcal P_2(\mathbb R^d)} \Phi(\nu)$.  
\smallbreak\noindent
\noindent\emph{(ii) $\implies$ (i).}  
Fix $\mu\in\mathcal P_2(\mathbb R^d)$.  
If $\partial\Phi(\mu)=\varnothing$, then $d_{L^2}(0,\partial\Phi(\mu))=+\infty$ and the inequality is trivial.  
Assume henceforth $\partial\Phi(\mu)\neq\varnothing$.  
Fix $\xi\in\partial\Phi(\mu)$.  
By the definition of the  subdifferential we have
\begin{align*}
    \Phi(\nu)\ge \Phi(\mu) + \int_{\mathbb R^d}\langle \xi(x), \mathfrak t_{\mu}^{\nu}(x)-x\rangle\,d\mu(x)\quad\forall \nu\in\mathcal P_2(\mathbb R^d).
\end{align*}
From this, 
\[
    \Phi(\mu)-\Phi(\nu)
    \le \int_{\mathbb R^d}\langle \xi(x), x-\mathfrak t_{\mu}^{\nu}(x)\rangle\,d\mu(x)
    \le \|\xi\|_{L^2(\mu)^d}\,W_2(\mu,\nu).
\]
Taking the infimum over $\nu\in\Lambda$, we obtain
\[
    \Phi(\mu)-\inf_{\nu\in\mathcal P_2(\mathbb R^d)}\Phi(\nu)
    \le \|\xi\|_{L^2(\mu)^d}\,W_2(\mu,\Lambda).
\]
Combining this with (\ref{growthW})  gives $ c\,W_2(\mu,\Lambda)\le \|\xi\|_{L^2(\mu)^d}$. 
Taking the infimum over $\xi\in\partial\Phi(\mu)$ gives
the desired inequality with $\kappa = 1/c$.
\smallbreak\noindent
\noindent\emph{(i) $\implies$ (ii).}  
Fix $\mu\in\mathcal P_2(\mathbb R^d)$ and $\mu^\tau\in J_\tau^\Phi(\mu)$.  
By Theorem~\ref{thmW}, there exists $\xi\in\partial\Phi(\mu^\tau)$ such that
\begin{align}\label{thmWrelations}
    \mathfrak t_{\mu^\tau}^{\mu} = \id + \tau\,\xi,
    \qquad
    \|\xi\|_{L^2(\mu^\tau)^d} = \frac{1}{\tau}W_2(\mu,\mu^\tau),
    \qquad
    \Phi(\mu) - \Phi(\mu^\tau) \ge \frac{1}{2\tau}W_2(\mu,\mu^\tau)^2.
\end{align}
Using (\ref{subregW}) and  \eqref{thmWrelations}, we get
\[
    W_2(\mu^\tau,\Lambda)\le \kappa\,d_{L^2}\big(0,\partial\Phi(\mu^\tau)\big)
    \le \kappa\,\|\xi\|_{L^2(\mu^\tau)^d}
    = \frac{\kappa}{\tau}\,W_2(\mu,\mu^\tau).
\]
By the triangle inequality,
\[
    W_2(\mu,\Lambda)
    \le W_2(\mu,\mu^\tau)+W_2(\mu^\tau,\Lambda)
    \le \Big(1+\frac{\kappa}{\tau}\Big)W_2(\mu,\mu^\tau).
\]
We deduce from \eqref{thmWrelations} that
\[
    \Phi(\mu) - \inf_{\nu\in\mathcal P_2(\mathbb R^d)}\Phi(\nu)
    \ge \frac{1}{2\tau}W_2(\mu,\mu^\tau)^2
    \ge \frac{\tau}{2(\tau+\kappa)^2}\,W_2(\mu,\Lambda)^2.
\]
Hence (\ref{growthW}) holds with $c = 2^{-1}\tau(\tau+\kappa)^{-2}$.
\end{proof}

\begin{remark}
    In Theorem~\ref{Ent_Loj} we assume uniform existence of minimizers for the JKO steps.  
This assumption is sometimes adopted in the literature to simplify the analysis and to avoid 
technical complications concerning the well-posedness of the minimizing movement scheme; 
see, for instance, \cite[Assumption~(10.1.1b)]{AGS_2008}.
\end{remark}

\begin{remark}
There are several examples of functionals on Wasserstein spaces satisfying condition~\eqref{growthW} (and consequently (\ref{subregW})), for instance, those given by the sum of internal and potential energies for specific parameter choices; we refer to \cite[Section~3]{BB_2018} and \cite[Section~4.2]{HM_2019} for a comprehensive account.
\end{remark}



\appendix
\section{The particular case of normed spaces}\label{A}
In this subsection of the appendix, we specialize the metric constructions of Section~\ref{Mr} to the classical setting of a real normed space. In this framework, the optimality condition for a proximal step admits a concrete expression in dual variables, and it becomes transparent how the choice of a proximal point produces an approximate minimizer together with a subgradient of controlled size.
\smallbreak\noindent
The following theorem can be regarded as an extension of \cite[Theorem 4.1]{C_2023} from Hilbert spaces to general normed spaces. Its proof is a direct consequence of Theorem \ref{thm1}.
\begin{theorem}\label{thmn}
Let $(X,\|\cdot\|)$ be a real normed space, and $\Phi:X\to\mathbb R\cup\{+\infty\}$ a proper convex functional. Fix $p,q\in(1,+\infty)$ such that $p^{-1}+q^{-1}=1$. 
Let \(u \in X\) and $\varepsilon>0$ be such that 
\[
\Phi(u) \leq \inf_{v \in X} \Phi(v) + \varepsilon.
\]
Let $\lambda>0$ be given.  Suppose that $\hat u\in X$ and $\xi\in \partial\Phi(\hat u)$ satisfy
\begin{align}\label{HBd}
    \langle\xi,u-\hat u\rangle = \|\xi\|\|u-\hat u\|
      \,\,\,\,\text{and}\,\,\,\, \|\xi\|=\frac{\varepsilon}{\lambda^p}\|u-\hat u\|^{p-1}
\end{align}
Then, the following estimates are satisfied. 
\begin{align*}
    (a)\,\,\, \|u-\hat u\|\le\lambda\qquad\,\,\,\, (b)\,\,\,\Phi(\hat{u}) + \frac{\lambda^q}{p\varepsilon^{\frac{q}{p}}}\|\xi\|^q \leq \Phi(u)\qquad \,\,\,\,(c)\,\,\, \|\xi\|\le\frac\varepsilon\lambda. 
\end{align*}
\end{theorem}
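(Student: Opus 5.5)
The plan is to show that conditions \eqref{HBd} force $\hat u$ to be a proximal point, that is, $\hat u\in\operatorname{prox}_{\frac{\lambda^p}{\varepsilon}\Phi}(u)$. Once this is known, Theorem~\ref{thm1}-(a) gives item (a) directly. The remaining items then come from combining \eqref{HBd} with the optimality inequality, exactly as in the proof of Theorem~\ref{thm2}.

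\emph{Step 1: $\hat u$ minimizes the proximal functional.} Set $\omega(s)=s^p/p$ and $g(v):=\frac{\varepsilon}{p\lambda^p}\|v-u\|^p$. I claim that $-\xi\in\partial g(\hat u)$. This is the standard duality-map description of the subdifferential of $\frac1p\|\cdot\|^p$. It can also be checked directly:
\begin{align*}
\|v-u\|\,\|\xi\|\ \ge\ \langle -\xi, v-u\rangle ,\qquad \langle -\xi,\hat u-u\rangle=\|\xi\|\|\hat u-u\|,
\end{align*}
where the equality is the first condition in \eqref{HBd}. Convexity of $\omega$ then gives
\begin{align*}
\tfrac{\varepsilon}{\lambda^p}\big(\omega(\|v-u\|)-\omega(\|\hat u-u\|)\big)\ge \tfrac{\varepsilon}{\lambda^p}\|\hat u-u\|^{p-1}\big(\|v-u\|-\|\hat u-u\|\big).
\end{align*}
By the second condition in \eqref{HBd}, the right-hand side equals $\|\xi\|(\|v-u\|-\|\hat u-u\|)$. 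That quantity is at least $\langle -\xi,v-u\rangle-\langle-\xi,\hat u-u\rangle=\langle -\xi,v-\hat u\rangle$.

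Adding this to the subgradient inequality $\Phi(v)\ge\Phi(\hat u)+\langle\xi,v-\hat u\rangle$ gives
\begin{align*}
\Phi(v)+g(v)\ge\Phi(\hat u)+g(\hat u)\qquad\forall v\in X.
\end{align*}
This says $\hat u\in\argmin\{\Phi+g\}=\operatorname{prox}_{\frac{\lambda^p}{\varepsilon}\Phi}(u)$. This step is the only genuine work: the delicate point is to use the alignment condition to make the inequality for $\|\cdot\|$ tight at $\hat u$. No sum rule is needed, since we only add two inequalities.

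\emph{Step 2: items (a)--(c).} Since a normed space is geodesic and $\Phi$ is convex along segments, Theorem~\ref{thm1} applies to $\hat u$, and (a) follows. For (c), combine (a) with the second condition in \eqref{HBd}:
\begin{align*}
\|\xi\|=\tfrac{\varepsilon}{\lambda^p}\|u-\hat u\|^{p-1}\le\tfrac{\varepsilon}{\lambda^p}\lambda^{p-1}=\tfrac{\varepsilon}{\lambda}.
\end{align*}
For (b), take $v=u$ in the optimality inequality of Step 1. This gives $\Phi(\hat u)+\frac{\varepsilon}{p\lambda^p}\|\hat u-u\|^p\le\Phi(u)$. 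Now raise the second condition in \eqref{HBd}, written as $\frac{\lambda^p}{\varepsilon}\|\xi\|=\|u-\hat u\|^{p-1}$, to the power $q$. This yields
\begin{align*}
\|u-\hat u\|^p=\Big(\tfrac{\lambda^p}{\varepsilon}\|\xi\|\Big)^q .
\end{align*}
Substituting and simplifying $\frac{\varepsilon}{p\lambda^p}\big(\frac{\lambda^p}{\varepsilon}\big)^q=\frac{\lambda^q}{p\varepsilon^{q/p}}$, using $pq-p=q$, gives (b), exactly as in the proof of Theorem~\ref{thm2}.
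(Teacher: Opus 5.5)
Your proof is correct and follows the paper's route. You show that \eqref{HBd} makes $\hat u$ a proximal point, invoke Theorem~\ref{thm1}-(a), and then get (b) and (c) from the optimality inequality at $v=u$ and the norm identity in \eqref{HBd}, just as the paper does. The only difference is that you verify $-\xi\in\partial g(\hat u)$ and the minimality of $\hat u$ by hand, whereas the paper simply cites the duality-mapping description $0\in\partial\Phi(\hat u)+\frac{\varepsilon}{\lambda^p}J_p(\hat u-u)$.
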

\begin{proof}
Let us begin with item $(a)$. Condition (\ref{HBd}) is equivalent to 
\[
    0\in \partial \Phi(\hat u) + \frac{\varepsilon}{\lambda^p}J_p(\hat u-u),
\]
where $J_p:X\rightrightarrows X^*$ denotes the duality mapping. This in turn implies  $\hat u\in \operatorname{prox}_{\frac{\lambda^p}{\varepsilon}\Phi}(u)$. Item $(a)$ then follows from Theorem~\ref{thm1}. Using the identities $\|\xi\|=\frac{\varepsilon}{\lambda^p}\|u-\hat u\|^{p-1}$ and
$(p-1)q=p$, we obtain 
\[
\frac{\varepsilon}{p\lambda^p}\|u-\hat u\|^{p}
=\frac{\lambda^q}{p\varepsilon^{q/p}}\|\xi\|^{q}.
\]
Combining this with the fact that $\hat u\in \operatorname{prox}_{\frac{\lambda^p}{\varepsilon}\Phi}(u)$ yields then item $(b)$. Finally,
\[
\|\xi\|=\frac{\varepsilon}{\lambda^p}\|u-\hat u\|^{p-1}
\le \frac{\varepsilon}{\lambda^p}\lambda^{p-1}=\frac{\varepsilon}{\lambda},
\]
whence item $(c)$ follows. 
\end{proof}

\begin{remark}
In the Hilbertian case with the choice $p=2$, the duality mapping coincides with the identity under the canonical identification of $X$ with its dual. In this situation, condition \eqref{HBd} reduces to the construction in \cite[Theorem 4.1]{C_2023}.
\end{remark}

\begin{remark}
Item $(b)$ strengthens the classical estimate $\Phi(\hat u)\le \Phi(u)$ by giving an explicit lower bound on the decrease $\Phi(u)-\Phi(\hat u)$ in terms of the size of a subgradient at $\hat u$. In particular, if $\hat u$ is not a critical point, then the proximal construction yields a strict decrease, that is, $\Phi(\hat u)<\Phi(u)$.
\end{remark}

\section{The limiting case $p=+\infty$}\label{B}
This part of the appendix records the limiting regime in which the proximal regularization becomes a hard constraint. As the exponent in the powered-distance kernel tends to infinity, the penalized problems enforce confinement to a fixed ball; in the sense of $\Gamma$--convergence the kernels converge to the indicator function of a ball. Accordingly, the approximate minimizer produced by the proximal step is replaced by a minimizer under a ball constraint, and the same Ekeland-type estimate is recovered.

\begin{theorem}\label{thmpinfty}
Let $\big(X, d\big)$ be a geodesic space, and \(\Phi \colon X \to \mathbb{R} \cup \{+\infty\}\) a proper geodesically convex functional.  Let \(u \in X\) and $\varepsilon>0$ be such that 
\[
\Phi(u) \leq \inf_{v \in X} \Phi(v) + \varepsilon.
\]
Assume that there exist $\hat u\in X$ and $\lambda>0$ such that 
\[
    \hat u\in \argmin_{v \in X} \left\{\Phi(v) + \iota_{\mathbb B(u,\lambda)}(v)\right\}.
\]
Then the following estimates are satisfied. 
\begin{align*}
    (a)\,\,\, d(u,\hat u)\le\lambda\qquad\,\,\,\, (b)\,\,\,\Phi(\hat{u})  \leq \Phi(u)\qquad \,\,\,\,(c)\,\,\, \Phi(\hat u) \le \Phi(v) + \frac{\varepsilon}{\lambda} d(v,\hat u)\quad \forall v\in X. 
\end{align*}
\end{theorem}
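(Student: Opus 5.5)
Items (a) and (b) are immediate from the constraint and from admissibility of $u$. Since $\hat u$ minimizes $\Phi+\iota_{\mathbb B(u,\lambda)}$ and this objective is finite at $u$ (take $u\in\dom\Phi$; otherwise the hypothesis forces $\Phi\equiv+\infty$ on a proper functional, which is impossible), the value at $\hat u$ is finite. Hence $\hat u\in\mathbb B(u,\lambda)$, i.e.\ $d(u,\hat u)\le\lambda$, which is (a). Comparing with the admissible point $v=u$ gives $\Phi(\hat u)\le\Phi(u)$, which is (b). I read $\mathbb B(u,\lambda)$ as the closed ball. An open ball gives a strict inequality, which is harmless.

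For (c), fix $v\in X\setminus\{\hat u\}$ with $\Phi(v)<\Phi(\hat u)$; otherwise (c) is trivial. Mirroring the proof of Theorem~\ref{thm1}(c), I would not start the geodesic at $\hat u$. The useful geodesic runs from $u$ toward $v$, because the constraint is centered at $u$, and the ball radius $\lambda$ must be converted into a slope $\varepsilon/\lambda$.

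Case 1: $d(u,v)\le\lambda$. Then $v$ is admissible, so $\Phi(\hat u)\le\Phi(v)$ and (c) holds.

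Case 2: $d(u,v)>\lambda$. Take a geodesic $\gamma$ from $u$ to $v$ along which $\Phi$ is convex, and set $s:=\lambda/d(u,v)\in(0,1)$. Then $\gamma(s)$ is admissible, and
\[
\Phi(\hat u)\le\Phi(\gamma(s))\le(1-s)\Phi(u)+s\Phi(v).
\]
Using $\Phi(u)\le\inf\Phi+\varepsilon\le\Phi(\hat u)+\varepsilon$, this rearranges to
\[
s\big(\Phi(\hat u)-\Phi(v)\big)\le(1-s)\varepsilon,
\quad\text{that is,}\quad
\Phi(\hat u)-\Phi(v)\le\frac{\varepsilon}{\lambda}\big(d(u,v)-\lambda\big).
\]
Since $d(u,v)-\lambda\le d(u,v)-d(u,\hat u)\le d(v,\hat u)$ by (a) and the triangle inequality, (c) follows.

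The main obstacle I expect is exactly this choice of where to anchor the geodesic. The direct route from $\hat u$ toward $v$, used in Theorem~\ref{thm1}, fails here because the constraint gives no first-order information when $\hat u$ lies in the interior of the ball. Anchoring at $u$ avoids this. The point to check is that Case 2 uses only that $\gamma(s)$ lies in the ball; that holds because $d(u,\gamma(s))=s\,d(u,v)=\lambda$ for a constant-speed geodesic.
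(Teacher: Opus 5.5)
Your proof is correct and follows the paper's argument. It uses the same two cases according to whether $d(u,v)\le\lambda$, the same geodesic anchored at $u$ with $t=\lambda/d(u,v)$ so that $\gamma(t)$ lies in the closed ball, and the same final triangle-inequality step $d(u,v)-\lambda\le d(v,\hat u)$. The only cosmetic difference is that you use $\Phi(u)\le\Phi(\hat u)+\varepsilon$ where the paper uses $\Phi(u)\le\Phi(v)+\varepsilon$; this gives $\frac{\varepsilon}{\lambda}(d(u,v)-\lambda)$ directly rather than through the bound $1-t\le (d(u,v)-\lambda)/\lambda$. (With an open ball, Case 2 would need a limit $s'\uparrow s$, so your closed-ball reading is actually used in (c), not only in (a).)
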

\begin{proof}
    Items $(a)$ and $(b)$ follow trivially. Let $v\in X$; the computation is divided in two cases. If \(d(u,v)\le \lambda\), then \(v\in\mathbb B(u,\lambda)\) and by minimality of \(\hat u\) on
\(\mathbb B(u,\lambda)\) we have \(\Phi(\hat u)\le \Phi(v)\), hence
\[
\Phi(\hat u)\le \Phi(v)\le \Phi(v)+\frac{\varepsilon}{\lambda}d(v,\hat u).
\]
If \(d(u,v)>\lambda\), let \(\gamma:[0,1]\to X\) be a constant-speed geodesic from \(u\) to \(v\) along which $\Phi$ is convex, and set
\(t:=\lambda/d(u,v)\). Then \(\gamma(t)\in\mathbb B(u,\lambda)\), so
\(\Phi(\hat u)\le \Phi(\gamma(t))\). By geodesic convexity,
\[
\Phi(\hat u)\le \Phi(\gamma(t))\le (1-t)\Phi(u)+t\Phi(v)\le (1-t)(\Phi(v)+\varepsilon)+t\Phi(v)
=\Phi(v)+(1-t)\varepsilon.
\]
Since \(1-t=(d(u,v)-\lambda)/d(u,v)\le (d(u,v)-\lambda)/\lambda\), it follows that
\[
\Phi(\hat u)\le \Phi(v)+\frac{\varepsilon}{\lambda}\,(d(u,v)-\lambda).
\]
Finally, \(d(u,v)\le d(u,\hat u)+d(\hat u,v)\le \lambda+d(\hat u,v)\) by (a), hence
\(d(u,v)-\lambda\le d(v,\hat u)\), and therefore
\[
\Phi(\hat u)\le \Phi(v)+\frac{\varepsilon}{\lambda}d(v,\hat u).
\]
This proves item $(c)$ and completes the proof. 
\end{proof}

\section{The case of general proximal kernels}\label{C}

\noindent
In this subsection of the appendix, we present a more general version of Theorem \ref{thm1}, in which powered-distance perturbations are replaced by more general ones.
\smallbreak\noindent
Let $\omega:[0,+\infty)\to[0,+\infty)$ be a convex  function satisfying 
\begin{align*}
    \omega(0)=0\qquad\text{and}\qquad \omega(s)>0\quad \forall s>0.
\end{align*}
These properties imply the existence of the right-derivative $\omega'_+:[0,+\infty)\to[0,+\infty)$ and furthermore that $\omega_+'(s)>0$ for all $s>0$. 
We are now ready to state the general theorem.
\begin{theorem}\label{thmgenper}
Let $\big(X, d\big)$ be a geodesic space, and \(\Phi \colon X \to \mathbb{R} \cup \{+\infty\}\) a proper geodesically convex functional.  Let \(u \in X\) and $\varepsilon>0$ be such that 
\[
\Phi(u) \leq \inf_{v \in X} \Phi(v) + \varepsilon.
\]
Assume that there exist $\hat u\in X$ and $\lambda>0$ such that 
\[
    \hat u\in \argmin_{v \in X} \left\{ \Phi(v) + \frac{\varepsilon}{\lambda \hspace{0.02cm} \omega'_+(\lambda)}\omega\big(d(v,u)\big) \right\}.
\]
Then the following estimates are satisfied. 
\begin{align*}
    (a)\,\,\, d(u,\hat u)\le\lambda\qquad\,\,\,\, (b)\,\,\,\Phi(\hat{u})  \leq \Phi(u)\qquad \,\,\,\,(c)\,\,\, \Phi(\hat u) \le \Phi(v) + \frac{\varepsilon}{\lambda} d(v,\hat u)\quad \forall v\in X. 
\end{align*}
\end{theorem}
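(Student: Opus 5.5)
The argument follows the proof of Theorem~\ref{thm1}, with the specific kernel $s\mapsto s^p/p$ replaced by the general convex function $\omega$. Throughout, set $\beta:=\varepsilon/(\lambda\,\omega'_+(\lambda))$, so that $\hat u$ minimizes $\Phi+\beta\,\omega(d(\cdot,u))$. We record two facts about $\omega$. The right derivative $\omega'_+$ is nondecreasing. Convexity also gives the supporting-line inequalities
\[
\omega(b)-\omega(a)\ge \omega'_+(a)(b-a)\quad\text{and}\quad \omega(b)-\omega(a)\le \omega'_+(b)(b-a)\qquad (a\le b),
\]
where the second holds because the left derivative at $b$ is at most $\omega'_+(b)$. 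We also note that $\Phi(\hat u)<+\infty$.

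\emph{Item $(b)$.} Test the optimality of $\hat u$ with $v=u$. Since $\omega\ge 0$ and $\omega(0)=0$, this gives $\Phi(\hat u)\le\Phi(\hat u)+\beta\,\omega(d(\hat u,u))\le\Phi(u)$.

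\emph{Item $(a)$.} Argue by contradiction. Suppose $D:=d(u,\hat u)>\lambda$ and set $t:=\lambda/D\in(0,1)$. Take the geodesic $\gamma$ from $u$ to $\hat u$ along which $\Phi$ is convex. Compare $\hat u$ with $\gamma(t)$, which satisfies $d(\gamma(t),u)=\lambda$, and use $\Phi(u)\le\Phi(\hat u)+\varepsilon$. This yields
\[
\beta\big(\omega(D)-\omega(\lambda)\big)\le (1-t)\varepsilon=\varepsilon\,\frac{D-\lambda}{D}.
\]
The left-hand side is at least $\beta\,\omega'_+(\lambda)(D-\lambda)=\varepsilon(D-\lambda)/\lambda$. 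Dividing by $\varepsilon(D-\lambda)>0$ gives $1/\lambda\le 1/D$, that is $D\le\lambda$, a contradiction.

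\emph{Item $(c)$.} Fix $v\ne\hat u$ and take the geodesic $v_t$ from $\hat u$ to $v$. As in the proof of Theorem~\ref{thm1} we have $d(v_t,u)\le d(\hat u,u)+t\,d(v,\hat u)$. Optimality of $\hat u$ together with convexity of $\Phi$ gives
\[
t\big(\Phi(\hat u)-\Phi(v)\big)\le\beta\big(\omega(d(v_t,u))-\omega(d(\hat u,u))\big).
\]
If $d(v_t,u)\le d(\hat u,u)$, the right side is $\le 0$ because $\omega$ is nondecreasing, which follows from $\omega\ge0=\omega(0)$ and convexity. In that case $\Phi(\hat u)\le\Phi(v)$ directly. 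Otherwise we use the upper supporting inequality and the monotonicity of $\omega$. Put $r_t:=d(\hat u,u)+t\,d(v,\hat u)$. Then
\[
\omega(d(v_t,u))-\omega(d(\hat u,u))\le\omega(r_t)-\omega(d(\hat u,u))\le\omega'_+(r_t)\,t\,d(v,\hat u).
\]
Dividing by $t$ gives $\Phi(\hat u)-\Phi(v)\le\beta\,\omega'_+(r_t)\,d(v,\hat u)$.

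The main obstacle is the limit $t\to0^+$. The right derivative $\omega'_+$ is right-continuous, since it is nondecreasing and equals the limit of its values from the right. Hence $\omega'_+(r_t)\to\omega'_+(d(\hat u,u))$. By $(a)$ and monotonicity, $\omega'_+(d(\hat u,u))\le\omega'_+(\lambda)$. Therefore
\[
\Phi(\hat u)-\Phi(v)\le\beta\,\omega'_+(\lambda)\,d(v,\hat u)=\frac{\varepsilon}{\lambda}\,d(v,\hat u),
\]
which is $(c)$. Two points need care here: the use of the right derivative at the correct point, and the right-continuity of $\omega'_+$. This is also where the general kernel genuinely differs from the power case, where $\omega'$ is continuous.
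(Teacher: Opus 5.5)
Your proof is correct and is the argument the paper intends when it says the proof is analogous to that of Theorem~\ref{thm1}: items $(a)$ and $(b)$ go through verbatim with $\omega'_+(\lambda)$ in place of $\lambda^{p-1}$. In $(c)$, replacing $d(v_t,u)$ by $r_t$ and then using the upper supporting inequality together with right-continuity of $\omega'_+$ is the right substitute for the continuity of $\omega'$ used in the power case.

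One small point: your justification of right-continuity is circular as phrased. The standard argument takes $s_n\downarrow s$ and a fixed $y>s$, and passes to the limit in $\omega'_+(s_n)\le(\omega(y)-\omega(s_n))/(y-s_n)$ using continuity of $\omega$, then lets $y\downarrow s$. The fact itself is true. Moreover, since $r_t\le\lambda+t\,d(v,\hat u)$ by $(a)$ and $\omega'_+$ is nondecreasing, you only need right-continuity at the interior point $\lambda$.
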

\begin{proof}
    The proof is analogous to that of Theorem \ref{thm1}.
\end{proof}

\begin{remark}
  Examples of functions $\omega:[0,+\infty)\to[0,+\infty)$ satisfying the hypotheses above include 
power-type (e.g.,\ $\omega(s) = s^p/p$ for $p\ge1$), 
exponential-type (e.g.,\ $\omega(s) = e^s - 1$), 
logarithmic-type (e.g.,\ $\omega(s) = s\log(1+s)$), 
and hyperbolic-type (e.g.,\ $\omega(s) = \log\cosh s$). 
We refer to \cite{BK_2018} for the study of proximal operators with Young moduli of continuity in uniformly convex Banach spaces.
\end{remark} 
\bibliography{references}{}
\bibliographystyle{plain}
\nocite{*}

\end{document}